\DeclareMathAlphabet{\mathpzc}{OT1}{pzc}{m}{it}
\newcommand\la{\leftarrow}
\newcommand\id{\mathrm{id}}
\newcommand\ten{\otimes}
\newcommand\eps{\epsilon}
\renewcommand\H{\mathrm{H}}
\newcommand\z{\mathrm{Z}}
\newcommand\Z{\mathbb{Z}}
\newcommand\Q{\mathbb{Q}}
\newcommand\R{\mathbb{R}}
\newcommand\Cx{\mathbb{C}}
\newcommand\bG{\mathbb{G}}
\newcommand\bL{\mathbb{L}}
\newcommand\cA{\mathcal{A}}
\newcommand\cC{\mathcal{C}}
\newcommand\cD{\mathcal{D}}
\newcommand\sF{\mathscr{F}}
\newcommand\sO{\mathscr{O}}
\newcommand\fX{\mathfrak{X}}
\renewcommand\L{\Lambda}
\newcommand\g{\mathfrak{g}}
\newcommand\cHom{\mathcal{H}\!\mathit{om}}
\newcommand\CAlg{\mathrm{CAlg}}
\newcommand\Hom{\mathrm{Hom}}
\newcommand\map{\mathrm{map}}
\newcommand\HHom{\underline{\mathrm{Hom}}}
\newcommand\Der{\mathrm{Der}}
\newcommand{\llb}{\llbracket}
\newcommand{\rrb}{\rrbracket}
\newcommand\Spec{\mathrm{Spec}\,}
\newcommand\Set{\mathrm{Set}}
\newcommand\Lim{\varprojlim}
\newcommand\LLim{\varinjlim}
\newcommand\ho{\mathrm{ho}\!}
\newcommand\xra{\xrightarrow}
\newcommand\xla{\xleftarrow}
\newcommand\bt{\bullet}
\newcommand\by{\times}
\newcommand\mc{\mathrm{MC}}
\newcommand\GL{\mathrm{GL}}
\newcommand\gl{\mathfrak{gl}}
\newcommand\et{\acute{\mathrm{e}}\mathrm{t}}
\newcommand\Tot{\mathrm{Tot}\,}
\newcommand\diag{\mathrm{diag}\,}
\newcommand\pd{\partial}
\newcommand\half{\frac{1}{2}}
\newcommand\red{\mathrm{red}}
\newcommand\dR{\mathrm{dR}}
\newcommand\DR{\mathrm{DR}}
\newcommand\op{\mathrm{opp}}
\newcommand\co{\colon\thinspace}
\newcommand\oR{\mathbf{R}}
\newcommand\uleft\underleftarrow
\newcommand\uline\underline
\newcommand\uright\underrightarrow
\newtheorem{theorem}{Theorem}[section]
\newtheorem{corollary}[theorem]{Corollary}
\newtheorem{lemma}[theorem]{Lemma}
\newtheorem*{theorem*}{Theorem}
\newtheorem*{proposition*}{Proposition}
\newtheorem*{corollary*}{Corollary}
\newtheorem*{lemma*}{Lemma}
\newtheorem*{conjecture*}{Conjecture}
\theoremstyle{definition}
\newtheorem{definition}[theorem]{Definition}
\newtheorem*{definition*}{Definition}
\newtheorem*{notation*}{Notation}
\theoremstyle{remark}
\newtheorem{example}[theorem]{Example}
\newtheorem{examples}[theorem]{Examples}
\newtheorem{remark}[theorem]{Remark}
\newtheorem*{example*}{Example}
\newtheorem*{examples*}{Examples}
\newtheorem*{remark*}{Remark}
\newtheorem*{remarks*}{Remarks}
\newtheorem*{exercise*}{Exercise}
\newtheorem*{property*}{Property}
\newtheorem*{properties*}{Properties}
\begin{document}

\begin{abstract}

We explain how any Artin stack $\fX$ over $\Q$ extends to a functor on non-negatively graded commutative cochain algebras, which we think of as functions on Lie algebroids or stacky affine schemes. There is a notion of \'etale morphisms for these CDGAs, and Artin stacks admit \'etale atlases by stacky affines,
giving rise to a small \'etale site of stacky affines over  $\fX$. This site has the same quasi-coherent sheaves as $\fX$ and leads to efficient formulations of shifted Poisson structures, differential operators and deformation quantisations for Artin stacks. There are generalisations to higher and derived stacks.

We also describe analogues for differentiable and analytic stacks; in particular, a Lie groupoid naturally gives  a functor on NQ-manifolds which we can use to transfer structures. In those settings,  local diffeomorphisms and biholomorphisms are the analogues of \'etale morphisms.

This note mostly elaborates  constructions scattered across several of the author's papers, but with an emphasis on the functor of points perspective. New results include consistency checks showing that the induced notions of structures such as vector bundles or torsors on a stacky affine scheme coincide with familiar definitions in terms of flat connections.

\end{abstract}

\title[\'Etale atlases for Artin stacks]{A note on \'etale atlases for Artin stacks and Lie groupoids, Poisson structures and quantisation}

\author{J.P.Pridham}

\maketitle

\section*{Introduction}

Artin stacks were introduced in \cite{Artin} to parametrise moduli problems for which some of the objects being parametrised have positive-dimensional automorphism groups.  Their analogues in differentiable and analytic settings include  Lie groupoids, considered up to Morita equivalence. Artin moduli stacks and their derived versions are a rich source of shifted symplectic structures \cite{PTVV}, important examples being the $(2-d)$-shifted symplectic structures on moduli of local systems on an oriented $d$-manifold and on  moduli of vector bundles on a smooth proper Calabi--Yau variety of dimension $d$.

Where symplectic structures arise,  there should also be Poisson structures and deformation quantisations, and the latter should give rise to deformations of the dg category of perfect complexes (roughly speaking, complexes of vector bundles). However, when attempting to formulate Poisson structures or deformation quantisations for Artin stacks, derived Artin stacks or Lie groupoids, one immediately encounters the difficulty that such structures on affine schemes are only functorial with respect to \'etale morphisms (local diffeomorphisms or biholomorphisms in differentiable and analytic settings). In contrast to the situation for Deligne--Mumford stacks or orbifolds, where local Poisson structures can be glued together, this means that structures on the stack cannot be expected to come from structures on an atlas.

The solution developed in \cite[\S \ref{poisson-stackyCDGAsn}]{poisson}, globalising \cite[Theorem \ref{ddt1-dequiv}]{ddt1}, is to work with
 commutative differential graded algebras (CDGAs) of the form
\[
 B^0 \xra{\pd} B^1 \xra{\pd} B^2 \xra{\pd} \ldots,
\]
which we think of as functions on stacky affine schemes. These should not be confused with the CDGAs arising as functions on derived affine schemes, in which the objects are chain algebras, with differentials mapping the other way. In the differentiable  setting, we impose the additional condition that $B^0$ should be the ring of functions on a $\cC^{\infty}$-space  and that the first differential should be a $\cC^{\infty}$  derivation; NQ manifolds then correspond to finitely generated, locally free objects, and are probably the most studied incarnation.  There are similar generalisations to analytic settings, using Stein spaces and analytic derivations.

In order to compare these CDGAs $B^{\bt}$ with more conventional geometric objects, we exploit the   cosimplicial denormalisation functor $D$ from such CDGAs to cosimplicial commutative rings (Definition \ref{Ddef}). This sends a stacky affine scheme to a simplicial scheme, or an NQ manifold to a simplicial $\cC^{\infty}$-space. Its output is highly singular, with $\Spec D^nB$ being an $n$-nilpotent thickening of $\Spec B^0$. For any sheaf, stack  or simplicial hypersheaf $\fX$ over $\Q$, this  naturally gives rise to a functor $D_*\fX$ on these stacky affine schemes (Definition \ref{Dlowerdef}). The power of this construction lies in the fact that when $\fX$ is an Artin $n$-stack, $D_*\fX$ admits an atlas by stacky affines (Theorem \ref{mainthm}) which is some sense \'etale.  For example, 
an \'etale atlas for $D_*(BG)$ is given by the Chevalley--Eilenberg CDGA of the Lie algebra $\g$, with   its \v Cech nerve taking a similar form (Example \ref{DstarYoverG}).

The consequences of Theorem \ref{mainthm} tend in two main directions. The first direction, exploited extensively in several works listed in the bibliography, is the formulation of concepts such as Poisson structures and deformation quantisations for Artin stacks and Lie groupoids. The theorem means  that in order to define a new structure on an Artin $n$-stack $\fX$, it suffices to define it on the small site of stacky affines which are \'etale over $D_*\fX$.  Since tangent modules are functorial with respect to \'etale morphisms of stacky affines (local diffeomorphisms of NQ-manifolds in the differentiable setting), this allows us efficiently to formulate shifted Poisson structures, differential operators and deformation quantisations for Artin stacks in terms of the small \'etale site of $D_*\fX$ (\S \ref{consequencesn}).

The second direction, which is only implicitly considered in those works, is that it provides an environment where we can compare NQ-manifolds and Artin stacks directly.  Corollary \ref{gooddescent} shows that  $D_*$ has good descent properties for Artin $n$-stacks, providing a full and faithful embedding.  
In the spirit of Grothendieck's functor of points philosophy, we can thus use $D_*$ to think of Artin $n$-stacks or Lie groupoids as functors on stacky affines,  and we can think of any stacky affine, such as an NQ-manifold, as doing the same via the Yoneda embedding. Consequently,  any sufficiently functorial structure on affine schemes with good descent properties has an essentially unique extension to stacky affines; in particular, this applies whenever the moduli functor parametrising the structure is represented by an Artin $n$-stack. For vector bundles and $G$-torsors, the resulting notions for stacky affines take the expected form in terms of flat connections (Examples \ref{BGLnex} and \ref{BGex}). Such structures on an Artin $n$-stack $\fX$ can then be characterised in terms of the corresponding structures on $D_*\fX$; we spell out the case of quasi-coherent sheaves in \S \ref{qcohsn}. 

There are generalisations to derived Artin stacks using stacky derived affines, whose rings of functions are the stacky CDGAs of \cite[\S \ref{poisson-stackyCDGAsn}]{poisson},  given by  introducing a second grading and a chain algebra structure. In particular, for a derived Artin $n$-stack, the associated functor $D_*\fX$ on stacky derived affines admits an \'etale atlas (Theorem \ref{mainthm2}). There are also analogues for (derived) differentiable stacks, in which setting stacky affines correspond to NQ-manifolds, and for (derived) analytic stacks in both Archimedean and non-Archimedean contexts (\S \ref{Fermatsn}).

For simplicity of exposition, we focus on the algebraic case. Readers primarily interested in differentiable or analytic settings might prefer to skip to \S \ref{Fermatsn} first.

\tableofcontents

\section{Stacky affines}\label{stackyaffsn}

\subsection{Basic definitions}

Fix a commutative $\Q$-algebra $R$.

\begin{definition}
 We write $DG^+\CAlg(R)$ for the category of CDGAs over $R$ concentrated in non-negative cochain degrees. Explicitly, an object $A$ of $DG^+\CAlg(R)$ is a cochain complex
\[
 A^0 \xra{\pd} A^1 \xra{\pd} A^2 \xra{\pd} \ldots
\]
of $R$-modules equipped with  associative graded-commutative multiplications $A^m\ten_RA^n \to A^{m+n}$, and a unit $1 \in A^0$, such that
$\pd$ is an   $R$-linear derivation in the graded sense that $\pd(ab)= (\pd a)b+(-1)^{\deg a} a(\pd b)$. 

For $A \in DG^+\CAlg(R)$, we denote by $\Spec A$ the corresponding object of the opposite category  $DG^+\CAlg(R)^{\op}$. 
We also write $A=O(\Spec A)$, and freely make use of the Yoneda embedding to regard $\Spec A$ as the set-valued functor $\Hom(A,-)$ on  $DG^+\CAlg(R)$.
\end{definition}

Beware that in contrast to most instances where cochain complexes crop up, we are primarily interested in these CDGAs up to isomorphism, not up  to quasi-isomorphism.

\begin{example}\label{exYoverg}
The key motivating example 
is given by  a finite projective Lie $R$-algebra $\g$ acting on a commutative $R$-algebra $B$ via a Lie $R$-algebra morphism $\alpha \co \g \to \Der_R(A)$. For $Y=\Spec B$, this leads to the 
CDGA  $O([Y/\g])$
of \cite[Example \ref{poisson-DstarBG}]{poisson} 
given by 
the Chevalley--Eilenberg complex
\[
O([Y/\g]):=(  B \xra{\pd} B\ten \g^{\vee} \xra{\pd} B\ten \Lambda^2\g^{\vee}\xra{\pd} \ldots) 
\]
of $\g$ with coefficients in the  $\g$-module $B$.

The functor  on   $DG^+\CAlg(R)$ associated to  $[Y/\g]:= \Spec O([Y/\g]) $ is then given by the subsets $[Y/\g](B)\subset   Y(B^0) \by (\g \ten_RB^1)$ of pairs $(y,\gamma)$ satisfying the Maurer--Cartan and compatibility conditions
\begin{align*}
 \pd_B\gamma + \half [\gamma,\gamma]&=0   \in \g \ten_RB^2\\ 
y \circ \alpha(\gamma) +\pd_B y  &=0   \in \Der(O(Y),B^1). 
\end{align*}
\end{example}

\begin{example}\label{exOmega}
 The construction of Example \ref{exYoverg} naturally  generalises to Lie algebroids given by Lie--Rinehart algebras. In particular, for a smooth affine scheme $Y$, we can consider the CDGA $\Omega^{\bt}_{Y/R}$ given by the de Rham algebra of $Y$ over $R$. As a functor on $DG^+\CAlg(R)$, 
the associated functor $\Spec \Omega^{\bt}_{Y/R} $ is given by $(\Spec \Omega^{\bt}_{Y/R})(B)\cong Y(B^0)$.
\end{example}

\subsection{\'Etale morphisms}\label{etsn} 

\begin{definition}
 We say that a morphism $f \co C' \to C$ in $DG^+\CAlg(R)$ is a square-zero extension if $f$ is surjective in every level and the kernel $I$ is a square-zero ideal.  We say that $f$ is a contractible square-zero extension if in addition the cochain complex $I$ admits a $C'$-linear (or, equivalently, $C$-linear) contracting homotopy. 
\end{definition}

\begin{definition}\label{fetdef}
Given  functors $F,G$ from $DG^+\CAlg(R)$ to sets, groupoids or simplicial sets, and a natural transformation $\eta \co F \to G$, we say that $\eta$ is formally  \'etale (resp. formally geometric)
if the maps
\[
 F(C') \to F(C)\by^h_{G(C)}G(C')
\]
 are surjective for all square-zero extensions (resp. all contractible square-zero extensions)
$C' \to C$. Here, the symbol $\by^h$ is a homotopy fibre product, which for set-valued functor is is just a plain fibre product, and for groupoid-valued functors is a $2$-fibre product. For groupoids, surjectivity here means essential surjectivity, while for simplicial sets it means $\pi_0$-surjectivity.   

We say that a functor $F$ on $DG^+\CAlg(R)$ is formally  \'etale  (resp. formally geometric)
if the transformation $F \to *$ to the constant functor is so. 
 \end{definition}

\begin{definition}\label{etdef}
We say that  a natural transformation $\eta \co F \to G$ of functors $F,G$ on  $DG^+\CAlg(R)$ is l.f.p. (locally of finite presentation) if for any filtered colimit $C = \LLim_{i \in I} C_i$ in $DG^+\CAlg(R)$, the natural map
\[
 \LLim_{i \in I}F(C_i) \to \LLim_{i \in I}G(C_i)\by_{G(C)}F(C)
\]
is an equivalence. 

We then say that $\eta$ is \'etale (resp. geometric)
if it is  formally  \'etale (resp. formally geometric)
and l.f.p. 

For $A \in DG^+\CAlg(R)$, we refer to $\Spec A$ as a stacky affine if it is geometric.
\end{definition}

Since any CDGA $A \in DG^+\CAlg(R)$ gives rise to a set-valued functor $\Spec A$, the definitions above give rise to notions of (formally) 
\'etale maps between objects of $DG^+\CAlg(R)$. The following lemma is a consequence of  standard obstruction arguments. 
Here, the module  $\Omega^1_{B/A}$ of K\"ahler differentials  is a $B$-module in cochain complexes, spanned by elements $db$ for $b \in B$ subject to the conditions $d(bc)= (-1)^{(\deg b+1)\deg c}  c(db)+(-1)^{\deg b}b(dc)$ and $df(A)=0$. 

\begin{lemma}\label{etlemma}
 A morphism $f \co A \to B$ is geometric if and only if
\begin{itemize}
 \item the graded algebra $B^{\#}$ underlying $B$  (given by ignoring the differential $\pd$)  is freely generated as a graded-commutative algebra over $A^{\#}\ten_{A^0}B^0$ by a finite graded projective $(A^{\#}\ten_{A^0}B^0)$-module.
\end{itemize}

The morphism $f$ is \'etale if and only if it is geometric and
\begin{itemize}
 \item $f^0 \co A^0 \to B^0$ is smooth, and
\item the cochain complex
\[
 \Omega^1_{B/A}\ten_BB^0
\]
 of projective $B^0$-modules is acyclic. 
\end{itemize}
\end{lemma}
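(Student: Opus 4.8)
The plan is to translate Definitions \ref{fetdef} and \ref{etdef} into an explicit lifting problem for CDGA maps and to run the standard obstruction theory on it. Since $\Spec B$ and $\Spec A$ are set-valued, $f$ is formally \'etale (resp. geometric) precisely when, for every square-zero extension (resp. every \emph{contractible} square-zero extension) $\pi \co C' \to C$ with kernel $I$ and every compatible pair $(g,h)$ with $g \co B \to C$, $h \co A \to C'$ and $\pi h = gf$, there is a CDGA lift $\tilde g \co B \to C'$ with $\pi\tilde g = g$ and $\tilde g f = h$; the full notions add the l.f.p.\ condition. Making $I$ a dg-$B$-module via $g$, I would organise each lift in two stages: first lift $g$ to a morphism $\tilde g^{\#}$ of the underlying graded algebras over $h$, then correct $\tilde g^{\#}$ into a cochain map. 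The standard computation is that, once a graded lift exists, the strict cochain lifts form a torsor under $Z^0\cHom_B(\Omega^1_{B/A}, I)$ while the obstruction to producing one is a class in $H^1\cHom_B(\Omega^1_{B/A}, I)$; and the graded lift itself exists as soon as a set of algebra generators of $B^{\#}$ can be lifted along the level-wise surjection ${C'}^{\#} \to C^{\#}$.

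For the geometric statement I would first show that freeness is exactly what makes the graded lift available: if $B^{\#} = \Sym_{A^{\#}\ten_{A^0}B^0}(M)$ with $M$ graded projective, one builds a graded lift by lifting the images of a basis of $M$ (using level-wise surjectivity) and sending $A^{\#}$ and $B^0$ via $h$ and a lift of $g^0$. The crucial point is that for a contractible extension the remaining degree-$0$ data is forced, without any smoothness of $f^0$: since $I$ is acyclic one has $H^0(I)=H^1(I)=0$, so the cochain condition $\pd_{C'}\tilde g(b)=\tilde g(\pd_B b)$ can always be solved for $b\in B^0$ with its differential prescribed (and on cocycles $Z^0(C')\to Z^0(C)$ is even an isomorphism). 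The cochain obstruction then vanishes as well, because a $B$-linear contracting homotopy on $I$ induces one on $\cHom_B(\Omega^1_{B/A}, I)$ by post-composition, whence $H^1\cHom_B(\Omega^1_{B/A}, I)=0$. Conversely, probing $f$ against universal contractible square-zero extensions produces the splittings exhibiting $B^{\#}$ as free on a projective module, and the l.f.p.\ condition upgrades ``projective'' to ``finite projective''.

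For the \'etale statement I would drop contractibility. Taking $I$ concentrated in degree $0$ (so that $C,C'$ are ordinary rings) recovers the classical criterion for lifting $g^0$ along a square-zero extension, so formally \'etale forces $f^0$ to be formally smooth, and l.f.p.\ promotes this to smoothness of $f^0$. For the surviving positive-degree obstruction, the vanishing of $H^1\cHom_B(\Omega^1_{B/A}, I)$ for \emph{all} square-zero $I$ is equivalent to an acyclicity statement for $\Omega^1_{B/A}$; using that $B$ is graded-free on the finite projective module $M$ and that $\Omega^1_{B^0/A^0}$ is $B^0$-projective once $f^0$ is smooth, a graded Nakayama argument along the filtration by $M$-degree reduces this to acyclicity of the single fibre complex $\Omega^1_{B/A}\ten_B B^0$. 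Conversely, smoothness of $f^0$ together with this acyclicity feed back into the torsor/obstruction picture and guarantee lifts against arbitrary square-zero extensions.

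The main obstacle I anticipate lies in the degree-$0$ bookkeeping and its coupling to higher degrees. In the geometric case one must argue carefully that acyclicity of $I$ alone suffices to lift the degree-$0$ \emph{ring} map (the delicate point being multiplicativity), with no smoothness of $f^0$; this works only because the cochain condition and the $B$-linear homotopy solve the degree-$0$ and positive-degree equations simultaneously rather than separately, the naive bare lift of $B^0$ being genuinely obstructible. In the \'etale case the subtle step is the graded Nakayama reduction of the acyclicity of $\Omega^1_{B/A}$ to that of its fibre $\Omega^1_{B/A}\ten_B B^0$, and throughout one must keep everything strict, working up to isomorphism rather than quasi-isomorphism as the paper insists, so that the controlling groups are the naive $Z^0$ and $H^1$ of $\cHom_B(\Omega^1_{B/A}, I)$ and not their derived counterparts (which would trivialise against contractible $I$ and lose all content).
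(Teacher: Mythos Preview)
The paper offers no proof beyond the phrase ``a consequence of standard obstruction arguments'', and your proposal is precisely a fleshing-out of those arguments: translate to a strict lifting problem, produce a graded-algebra lift, and control the chain-map defect via a class in $H^1\cHom_B(\Omega^1_{B/A},I)$, which the $B$-linear contracting homotopy on $I$ annihilates in the geometric case and which the acyclicity of $\Omega^1_{B/A}\ten_B B^0$ (plus smoothness of $f^0$) annihilates in the \'etale case. You have also correctly isolated the one genuinely delicate point --- producing the degree-$0$ ring lift in the geometric case without any smoothness hypothesis on $f^0$ --- and while your resolution of it is more a sketch than an argument, the paper supplies no further detail either, so there is nothing against which to compare.
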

%
%

\begin{remark}\label{etrmk}
The final condition in Lemma \ref{etlemma} is the reason for the terminology  ``\'etale'', and will ensure functoriality of constructions such as tangent modules and differential operators with respect to these morphisms.  However, beware that the categorical properties of our \'etale morphisms are more like those for smooth morphisms of affine schemes. In particular, a section of an \'etale map will not tend to be smooth, and if we restrict our functors to the subcategory of $R$-algebras  $\CAlg(R) \subset DG^+\CAlg(R)$, then for an \'etale map
$\eta \co F \to G$, we will only be able to say that $\eta|_{\CAlg(R)} \co F|_{\CAlg(R)} \to G|_{\CAlg(R)}$ is smooth. This is an essential feature, since it  allows smooth maps of affine schemes to have enhancements in stacky affines  which are \'etale in the sense that they preserve cotangent complexes.
\end{remark}

\begin{examples}\label{exYoverg2}
 Given a linear algebraic group $G$ over $R$ with associated Lie algebra $\g$, it follows from the definitions and Example \ref{exYoverg} that  $[G/\g]$ is \'etale over $\Spec R$.

Similarly, for a smooth affine scheme $Y$ over $R$,  the stacky affine  $\Spec \Omega^{\bt}_{Y/R} $ of Example \ref{exOmega}  is \'etale over $\Spec R$.

Since finite projective modules correspond to vector bundles, NQ-manifolds are examples of objects which are geometric over a point, though note that geometricity imposes no smoothness condition in degree $0$.
\end{examples}

\subsection{Denormalisation}\label{denormsn}

To any cosimplicial abelian group $V$,  there is an associated normalised  cochain complex $NV$ given by
\[
 N(V)^n :=\bigcap_{i\ge 0}\ker (\sigma^i: V^n \to V^{n-1})
\]
with differential $\sum_i (-1)^i \pd^i$. By the Dold--Kan correspondence (\cite{W} Theorem 8.4.1, passing to opposite categories and using \cite{W} Lemma 8.3.7), this functor gives an equivalence of categories between cosimplicial abelian groups and cochain complexes concentrated in non-negative degrees. We will be more concerned with the other half of the equivalence, the denormalisation functor $D$ given in level $n$ by the
formal sum 
\[
D^nA:= \bigoplus_{\begin{smallmatrix} m+s=n \\ 1 \le j_1 < \ldots < j_s \le n \end{smallmatrix}} \pd^{j_s}\ldots\pd^{j_1}A^m,
\]
 with the operations $\pd^j$ and $\sigma^i$ defined using the cosimplicial identities
\begin{enumerate}
\item $\pd^j\pd^i=\pd^i\pd^{j-1}\quad i<j$,
\item $\sigma^j\sigma^i=\sigma^i\sigma^{j+1} \quad i \le j$,
\item 
$
\sigma^j\pd^i=\left\{\begin{matrix}
			\pd^i\sigma^{j-1} & i<j \\
			\id		& i=j,\,i=j+1 \\
			\pd^{i-1}\sigma^j & i >j+1
			\end{matrix} \right. ,
$
\end{enumerate}
subject to the conditions that $\sigma^i a =0$ and $\pd^0a= \sum_{i=1}^{n+1}(-1)^i \pd^i a-\pd a $ for all $a \in A^n$. 

\begin{definition}\label{Ddef}
 For any $A \in DG^+\CAlg(R)$, the Dold--Kan denormalisation $DA$ is naturally a cosimplicial commutative $R$-algebra via the Eilenberg--Zilber shuffle product, which reduces to the following description (see for instance  \cite[Definition \ref{ddt1-nabla}]{ddt1}).
Given a finite set  $I$ of strictly positive integers, write $\pd^I= \pd^{i_s}\ldots\pd^{i_1}$, for $I=\{i_1, \ldots i_s\}$, with $1 \le i_1 < \ldots < i_s$. The shuffle product $\nabla$ is then  given on the basis by 
$$
(\pd^Ia)\nabla (\pd^J b):= \left\{ \begin{matrix} \pd^{I\cap J}(-1)^{(J\backslash I, I \backslash J)} (a\cdot b) & |a|= |J\backslash I|, |b|= |I\backslash J|,\\ 0 & \text{ otherwise},\end{matrix} \right.
$$
where for disjoint sets $S,T$ of integers, $(-1)^{(S,T)}$ is the sign of the shuffle permutation of $S \sqcup T $ which sends the first $|S|$ elements to $S$ (in order), and the remaining $|T|$ elements to $T$ (in order). 

Beware that this description  cannot be used to calculate  $ (\pd^Ia)\nabla \pd^J w $ when $0 \in I \cup J$ (for the obvious generalisation of $\pd^I$ to finite sets $I$ of distinct non-negative integers).
\end{definition}

Note that the map $\overbrace{\sigma^0 \circ \sigma^0 \circ \ldots \circ \sigma^0}^n \co D^n A \to A^0$ is surjective, with $n$-nilpotent kernel. Combining these for all $n$ gives a morphism $DA \to A^0$ of cosimplicial algebras (constant cosimplicial structure on the right) which is a nilpotent extension in every level.

\begin{example}
The cosimplicial commutative algebra $DA$ admits a simpler description whenever the CDGA $A$ is freely generated as a graded-commutative $A^0$-algebra by $A^1$, with $A^1$ projective as an $A^0$-module. In particular, this applies whenever $A$ is a  Chevalley--Eilenberg complex of a Lie--Rinehart algebra, as in Examples \ref{exYoverg} and \ref{exOmega}.

In these cases, the commutative algebra $D^nA$ is a commutative  $A^0$-algebra generated by $n$ copies of $A^1$, subject to the relations that for elements  $\alpha^{(r)}, \beta^{(s)}$ in the $r$th and $s$th copies of $A^1$ respectively, we have $\alpha^{(r)}\beta^{(s)}= - \alpha^{(s)}\beta^{(r)}$. Note the the product of any elements in the same copy of $A^1$ is thus zero. 

For $a \in A^0$, $\alpha \in A^1$  and $\pd \alpha = \sum_k \beta_k \wedge \gamma_k \in A^2$,  the coface operations $\pd^i \co D^nA \to D^{n+1}A$ are then given on generators by 
\[
 \pd^i a :=\begin{cases} a & i>0\\ 
            a - (\pd a)^{(1)} & i=0
           \end{cases}
\quad \pd^i \alpha^{(r)}:= \begin{cases} \alpha^{(r)} & i>r \\
                            \alpha^{(r+1)} & 0< i \le r \\
                            \alpha^{(r+1)} - \sum_k\beta_k^{(1)}\gamma_k^{(r+1)} & i=0,
                           \end{cases}
\]
and the codegeneracy operations by
\[
 \sigma^ia :=a, \quad \sigma^i\alpha^{(r)}:= \begin{cases} \alpha^{(r)} & i \ge r \\ \alpha^{(r-1)} & i<r ~\&~ r>1 \\ 0 & i=0 ~\&~ r=1. \end{cases}   
\]

Explicitly, when $A$ is the de Rham CDGA $\Omega^{\bt}_{k[x_1, \ldots, x_m]}$ of a polynomial ring, the ring $D^nA$ can be written as the  commutative $k$-algebra with generators $x_i, \eps_i^{(r)}$ for $1\le i \le m$ and $1 \le r \le n$, and relations $\eps_i^{(r)} \eps_j^{(s)}+ \eps_i^{(s)} \eps_j^{(r)}=0$.
 
\end{example}

We seldom need such explicit descriptions of the denormalisation functor, but we will need a concrete understanding of its left adjoint $D^*$.

\begin{definition}\label{Dstardef}
Given a cosimplicial  commutative algebra $A$, we define the CDGA $D^*A$ as follows. We first consider the  Dold--Kan normalised cochain complex $NA$ given by
\[
 N^mA:= \{a \in A^m ~:~ \sigma^ja = 0 \in A^{m-1}, ~\forall~ 0 \le i <m\},
\]
with differential $\pd a:= \sum(i-1)^i \pd^ia$. We then define an associative (non-commutative) product $\smile$ (a variant of the Alexander--Whitney cup product) on $NA$ by 
\[
a \smile b := (\pd^{[m+1,m+n]}a)\cdot (\pd^{[1,m]}b)  
\]
for  $a \in N^mA$, $b \in N^nA$. 

The commutative cochain algebra  $D^*A$ is then  the quotient of $NA$ by the relations
\[
(\pd^Ia)\cdot (\pd^J b) \sim \left\{ \begin{matrix} (-1)^{(J, I)} (a\smile b) & a \in A^{|J|},~ b\in A^{ |I|},\\ 0 & \text{ otherwise},\end{matrix} \right.
\]
for (possibly empty) sets $I,J$ with $I \cap J= \emptyset$.
\end{definition}

\begin{examples}\label{DstarBG}
Given  a smooth affine group scheme $G$ acting on an affine scheme $Y$, we can form a simplicial scheme $X_{\bt}$ as the nerve of the associated groupoid (with $X_n \cong Y \by G^n$). Functions on this simplicial scheme form a cosimplicial algebra $O(X)$, and then by 
\cite[Example \ref{poisson-DstarBG}]{poisson}, we have $D^*O(X) \cong O([Y/\g])$, the Chevalley--Eilenberg CDGA of Example \ref{exYoverg}.

More generally, if $X_{\bt}$ is the nerve of an affine groupoid scheme with smooth structure maps (or of a Lie groupoid in the differentiable setting), then $D^*O(X)$ is the Chevalley--Eilenberg complex of the associated Lie algebroid.

In particular, if $Y$ is smooth and we let $X_{\bt}$ be the nerve of the groupoid $[Y \by  Y\implies Y]$, then $D^*O(X) \cong \Omega^{\bt}_Y$, the de Rham complex.

\end{examples}

 Thus the  CDGA $D^*O(X)$ is the Chevalley--Eilenberg complex 
\[
O([Y/\g]):=(  O(Y) \xra{\pd} O(Y)\ten \g^{\vee} \xra{\pd} O(Y)\ten \Lambda^2\g^{\vee}\xra{\pd} \ldots) 
\]
of $\g$ with coefficients in the chain $\g$-module $O(Y)$. 

\subsection{Stacks give functors on CDGAs}

The construction $D^*$ from Definition \ref{Dstardef} does not alone suffice for recovering structures on Artin stacks or Lie groupoids from structures on stacky affines, since it clearly loses information and depends on the specific groupoid presentation. In this section we establish an intrinsic relationship to remedy this.

\begin{definition}\label{XKdef}
 Given simplicial sets $K,X$, we denote by $X^K$ the simplicial set given in level $i$ by
\[
 (X^k)_i:= \Hom_{s\Set}(K \by \Delta^i,X).
\]
\end{definition}

\begin{definition}\label{Dlowerdef}
 Given a functor $F$ from $R$-algebras to sets, groupoids or simplicial sets, we define a functor $D_*F$ on  $DG^+\CAlg(R)$ as the homotopy limit
\[
 D_*F(B):= \ho\Lim_{n \in \Delta} F(D^nB).
\]
\end{definition}
For set-valued functors, such as the functor  of points $B \mapsto \Hom(\Spec B, X)$ associated to a scheme $X$, this just gives $D_*F(B)$ as the equaliser of $\pd^0,\pd^1 \co F(B^0) \to F(D^1B)$. 

For groupoid-valued functors, such as the functor of points of a stack, $D_*F(B)$ is equivalent to the groupoid whose objects are pairs $(x,g)$ with $x \in F(B^0)$ and $g\co \pd^0x \to \pd^1x$ an  isomorphism $F(D^1B)$ satisfying the cocycle condition
\[
 \pd^1g = (\pd^2g) \circ (\pd^0g) \co \pd^1\pd^0x \to \pd^2\pd^1x 
\]
in $F(D^2B)$; an isomorphism between $(x,g)$ and $(x',g')$ is then given by an isomorphism $h \co x \to x'$ in $F(B^0)$ satisfying $g' \circ \pd^0h=(\pd^1h) \circ g \co \pd^0x \to \pd^1x'$. 

For simplicial set-valued functors, a model for $D_*F$ is the derived total space
\[
 \oR \Tot F(D^{\bt}B)  =\{ x \in \prod_n \oR F(D^nB)^{\Delta^n}\,:\, \pd^i x_n = \pd_i^{\Delta}x_{n+1},\,\sigma^i x_n = \sigma_i^{\Delta}x_{n-1}\},
\]
of
\cite[\S VIII.1]{sht}, where $\oR F(D^{\bt}B) $ is a  Reedy fibrant replacement of the cosimplicial space $F(D^{\bt}B) $,    and $\pd i^{\Delta}, \sigma_i^{\Delta}$ are defined in terms of the face and degeneracy maps between the simplices $\Delta^n$.

\begin{example}\label{DstarYoverG}
Given a linear algebraic group $G$ acting on an affine scheme $Y$,  the calculation of \cite[Example \ref{poisson-DstarBG}]{poisson} implies that for the associated groupoid $R$-scheme $(Y \by G \Rightarrow Y)$, we have
\[
 D_*(G \by Y \Rightarrow Y)= ([(Y \by G)/(\g \oplus \g)] \Rightarrow  [Y/\g]),
\]
where $[Y/\g]$ is defined in Example \ref{exYoverg}, $\g$ is the Lie algebra of $G$,  and the action of $\g \oplus \g$ on $Y \by G$ is given by the first factor combining the action on $Y$ with the left action on $G$, while the second factor acts via the right action on $G$. One of the maps $ [(Y \by G)/(\g \oplus \g)] \to  [Y/\g]$ is projection onto the first factors, while the other combines the action $Y \by G \to Y$ with projection $\g \oplus \g \to \g$ onto the second factor.
\end{example}

\begin{example}\label{YdRex}
Given a smooth  $R$-scheme $Y$, Simpson's de Rham stack $Y_{\dR}$ from  \cite{simpsonHtpy} is the functor on $R$-algebras defined by $Y_{\dR}(B):= Y(B^{\red})$, for $B^{\red}$ the quotient of $B$ by the ideal of nilpotent elements. Since the natural map $DA \to A^0$ is a nilpotent extension on each level, we get 
\[
 (D_*Y_{\dR})(B) = Y((B^0)^{\red}).
\]
For $Y$ affine,  the natural maps $\H^0B \to B^0 \to (B^0)^{\red}$ thus give natural transformations
\[
 D_*Y \to \Spec \Omega^{\bt}_Y \to D_*Y_{\dR},
\]
for $\Spec \Omega^{\bt}_Y$ as in Example \ref{exOmega}. It should not be surprising that $\Spec \Omega^{\bt}_Y$ differs somewhat from  $D_*Y_{\dR}$, since  $\Omega^{\bt}_Y$  can recover the Hodge filtration while $Y_{\dR}$ cannot.
\end{example}


\begin{definition}\label{sharpdef}
 Given a functor $F$ from  $DG^+\CAlg(R)$ to sets, groupoids or simplicial sets, denote by $F^{\sharp}$ the sheafification (resp. stackification, resp. hypersheafification) of $F$ with respect to those \'etale maps  (cf. Definition \ref{etdef}, Lemma \ref{etlemma}) $A \to B$ in  $DG^+\CAlg(R)$ for which $f^0\co A^0 \to B^0$ is faithfully flat.
\end{definition}
The faithful flatness condition amounts to saying that the associated morphism of schemes is surjective.

\begin{lemma}\label{sharplemma}
 If $F$ is an Artin $n$-stack over $R$, then the natural map $D_*F \to (D_*F)^{\sharp}$ is an equivalence.
\end{lemma}
\begin{proof}
 Take a cosimplicial diagram $B(\bt)$ in $DG^+\CAlg(R)$ equipped with a map $A \to B(\bt)$ from the constant diagram for which the relative latching maps are all \'etale in the sense of Definition \ref{etdef} and faithfully flat in cochain level $0$. We need to show that the map $D_*F(A) \to \ho\Lim_{r \in \Delta}D_*F(B(r))$ is an equivalence.
 
 Since $F$ satisfies smooth hyperdescent and $D_*F(A^0)=F(A^0)$, we know that the map $D_*F(A^0) \to \ho\Lim_{r \in \Delta}D_*F(B(r)^0)$ is a weak equivalence. Now consider the system of square-zero extensions $A^{\le n} \to A^{\le n-1}$ of CDGAs, which induce surjections $D(A^{\le n}) \to D(A^{\le n-1})$ of cosimplicial rings with square-zero kernels $D(A^n[-n])$. Since $F$ has an obstruction theory in the sense of \cite[Definition 1.4.2.2]{hag2}, taking homotopy fibres over $x \in F(A^0)$ gives  homotopy fibre sequences
 \begin{align*}
   F(D^m(A^{\le n}))_x \to  F(D^m(A^{\le n-1}))_x &\to \oR\HHom_{A^0}(x^*\bL_{F/R},D^m(A^n[-n])[1]) \\
   D_*F(A^{\le n})_x \to  D_*F(A^{\le n-1})_x &\to \oR\HHom_{A^0}(x^*\bL_{F/R},A^n[1-n]),
 \end{align*}
and similarly for each $B(r)$, where the first line implies the second  
by taking  $\ho\Lim_{m \in \Delta}$. 

Since the relative latching maps of $B(\bt)$ over $A$ are all faithfully flat as morphisms of graded algebras, they satisfy cohomological descent, so $\oR\HHom_{A^0}(x^*\bL_{F/R},A^n) \simeq \ho\Lim_{r \in \Delta} \oR\HHom_{A^0}(x^*\bL_{F/R},B(r)^n)$ for all $n$. We thus inductively deduce that the maps $D_*F(A^{\le n}) \to \ho\Lim_{r \in \Delta}D_*F(B(r)^{\le n})$ are all  weak equivalences, and the conclusion follows by passing to the homotopy limit over all $n$, noting that $D^m(B^{\le n}) \cong D^mB$ for all $m \le n$.
\end{proof}
\begin{remark}
The proof of Lemma \ref{sharplemma} applies to more general functors: the only properties required for $F$ are that it satisfies descent with respect to smooth hypercovers and that it has a cotangent complex giving an obstruction theory.
\end{remark}

\begin{example}\label{BGLnex}
 Associated to the algebraic group $\GL_n$, there is a stack $B\GL_n$ parametrising rank $n$ vector bundles, and we may combine Definition \ref{Dlowerdef} 
 and Lemma \ref{sharplemma}
 to give a stack $(D_*B\GL_n)$ on $DG^+\CAlg(R)$ with respect to the  stacky affine \'etale covers. Using Example \ref{DstarYoverG}, we may characterise this as the stackification of the groupoid functor $([\GL_n/(\gl_n \oplus \gl_n)] \Rightarrow  [\ast/\gl_n])$. 

It follows that $(D_*B\GL_n)(B)$ is equivalent to the  groupoid  of pairs $(M,\nabla)$ for $M$ a projective $B^0$-module (equivalently, a vector bundle) of rank $n$ and $\nabla \co M \to B^1\ten_{B^0}M$ a flat $\pd_B$-connection, i.e. a degree $1$ operator $\nabla$ on $B^{\#}\ten_{B^0}M$ satisfying $\nabla \circ \nabla =0$ and $\nabla(bm)= \pd(b)m+(-1)^{\deg b}\nabla(m)$ for $b \in B^{\#}, m \in M$. 

Explicitly, we can see this by first unwinding the definitions,  characterising elements of $[\ast/\g](B)$ as Maurer--Cartan elements 
\[
 \{\omega \in \g\ten B^1 ~:~ \pd_B\omega + \half[\omega,\omega]=0 \in \g \ten B^2\} =: \mc(\g\ten B),
\]
and elements of $[G/(\g \oplus \g)](B)$ as triples $(g, \omega_0, \omega_1) \in G(B^0) \by \mc(\g \ten B)^2$ such that $\omega_1= g\omega_0g^{-1} +dg.g^{-1}$, meaning that $g$ is a gauge transformation.

The source and target maps $\pd_0,\pd_1 \co [G/(\g \oplus \g)](B) \to [\ast/\g](B)$ are given by $\pd_i(g, \omega_0, \omega_1)= \omega_i$, while the product $[G/(\g \oplus \g)](B)\by_{ [\ast/\g](B)} [G/(\g \oplus \g)](B) $ sends $((g, \omega_0, \omega_1), (h,\omega_1, \omega_2))$ to $(hg,\omega_0, \omega_2)$.

Each object $\omega \in [\ast/\gl_n](B)$ thus corresponds to a flat connection $\nabla_{\omega}:=\pd_B + \omega$ on $(B^0)^{\oplus n}$, while each morphism $(g, \omega_0, \omega_1)\in [\GL_n/(\gl_n \oplus \gl_n)](B)$ corresponds to the element $g \in \GL_n(B^0)$ acting as an automorphism of  $(B^0)^{\oplus n}$ intertwining $\nabla_{\omega_0},\nabla_{\omega_1}$. 

The groupoid $([\GL_n/(\gl_n \oplus \gl_n)] \Rightarrow  [\ast/\gl_n])$ in presheaves thus represents the groupoid of  flat $\pd_B$-connections $\nabla$ on free rank $n$ $B^0$-modules $M$.  On applying \'etale descent, we recover the groupoid of pairs $(M,\nabla)$ with $M$ locally free, since every locally free module is free on some \'etale cover. 
%
\end{example}

\begin{example}\label{BGex}
 For an arbitrary linear algebraic group $G$, we can consider the stack $BG$ parametrising $G$-torsors.  A similar argument to the previous example characterises $(D_*BG)(B)$
as the groupoid of pairs $(P,\nabla)$, for $P$ a $G$-torsor over $B^0$ and $\nabla \co O(P)\to O(P)\ten_{B^0}B^1$ a flat connection acting as a differential with respect to the multiplicative structure on the ring of functions $O(P)$, required to be $G$-equivariant in the sense that $\mu \circ \nabla = (\nabla \ten \id) \circ \mu$, for $\mu \co O(P) \to O(P)\ten O(G)$ the co-action.
\end{example}

\section{\'Etale atlases and quasi-coherent sheaves}

\subsection{\'Etale atlases}

Given a sheaf or stack  $F$ on $DG^+\CAlg(R)$ with respect to the \'etale covers of Definition \ref{sharpdef}, we can define \'etale atlases or $1$-atlases exactly as we do for algebraic spaces or Deligne--Mumford stacks \cite{champs}, in terms of \'etale covers $U \to F$ satisfying relative representability conditions. Likewise for simplicial hypersheaves on $DG^+\CAlg(R)$, we have a notion of $(n, \et)$-geometricity by following the yoga of   \cite{simpsonAlgGeomNstacks,hag2} (the terminology of \cite{lurie} is slightly different --- see \cite[Remark \ref{stacks2-cflurie}]{stacks2} for a comparison). This leads to the following key theorem:

\begin{theorem}\label{mainthm}
 Given an  Artin stack $\fX$ over $R$, the stack $D_*\fX$ on $DG^+\CAlg(R)$ admits an \'etale atlas by a cover of stacky affines. More generally, for any $n$-geometric  Artin stack $\fX$ over $R$, the hypersheaf $D_*\fX$ is $(n, \et)$-geometric.
\end{theorem}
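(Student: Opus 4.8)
The plan is to argue by induction on the geometric level $n$, at each stage presenting the Artin stack by a smooth affine (hyper)groupoid and enhancing it to a groupoid of stacky affines whose structure maps are \'etale, the passage from ``smooth'' to ``\'etale'' being precisely the phenomenon flagged in Remark \ref{etrmk}. The computational engine is Example \ref{DstarYoverG}: it shows that $D_*$ carries the smooth action groupoid $(Y\by G \Rightarrow Y)$ to the groupoid $([(Y\by G)/(\g\oplus\g)]\Rightarrow [Y/\g])$ of stacky affines, whose source and target maps are \'etale by Examples \ref{exYoverg2}. I would first record the analogue for an arbitrary smooth affine groupoid $X_1 \Rightarrow X_0$ of finite presentation: writing $\g$ for its Lie algebroid over $X_0$ (the relative tangent sheaf along the source fibres, which carries a canonical Lie--Rinehart structure because the source map is smooth, cf.\ Example \ref{exOmega}), the same calculation identifies $D_*(X_1\Rightarrow X_0)$, up to the \'etale-local equivalences that become invisible after $(-)^{\sharp}$, with a groupoid of stacky affines having object part the Chevalley--Eilenberg stacky affine $[X_0/\g]$ of Example \ref{exYoverg} and \'etale structure maps.

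For the base case of a $1$-geometric Artin stack $\fX$, presented as $[X_1\Rightarrow X_0]$, this produces the candidate atlas
\[
 \pi \co [X_0/\g] \lra (D_*\fX)^{\sharp}.
\]
I would check that $[X_0/\g]$ is a stacky affine---it is geometric over $\Spec R$ by Lemma \ref{etlemma}, its underlying graded algebra being freely generated over $O(X_0)$ by the finite projective module $\g^{\vee}$ placed in cochain degree $1$ (the l.f.p.\ condition of Definition \ref{etdef} holding because the presentation may be taken of finite presentation)---and that $\pi$ is \'etale and a cover. For \'etaleness I would apply the formal criterion of Definition \ref{fetdef}, reducing to an unobstructed square-zero lifting property for the Chevalley--Eilenberg differential; pulling back along the smooth chart $X_0 \to \fX$ yields a morphism of stacky affines to which Lemma \ref{etlemma} applies directly, and there the required acyclicity of $\Omega^1\ten B^0$ is exactly the statement that the odd Chevalley--Eilenberg generators cancel the smooth (but non-\'etale) relative tangent directions of $X_0 \to \fX$, so that $\g$ ``fills in'' the relative cotangent complex. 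That $\pi$ is a cover for the topology of Definition \ref{sharpdef} follows from smoothness of the atlas: \'etale-locally on $B^0$ every $B$-point of $(D_*\fX)^{\sharp}$ lifts to a point of $X_0$ equipped with a flat connection, i.e.\ a Maurer--Cartan element of $\g\ten B$, which is precisely a $B$-point of $[X_0/\g]$.

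For the inductive step I would present an $n$-geometric Artin stack by a smooth atlas $X_0 \to \fX$ with $X_0$ affine and $X_0\by_{\fX}X_0$ an $(n-1)$-geometric Artin stack, and form $\pi \co [X_0/\g]\to (D_*\fX)^{\sharp}$ exactly as above. The good descent of Corollary \ref{gooddescent} then lets me compute the self-fibre product $[X_0/\g]\by_{(D_*\fX)^{\sharp}}[X_0/\g]$ as the stacky-affine enhancement of $D_*(X_0\by_{\fX}X_0)$, which by the inductive hypothesis applied to the $(n-1)$-geometric stack $X_0\by_{\fX}X_0$ is $(n-1,\et)$-geometric. Combined with the \'etaleness and covering property of $\pi$ established as in the base case, this verifies the recursive definition of $(n,\et)$-geometricity and closes the induction. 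For genuinely higher stacks the same scheme is run on a smooth Artin hypergroupoid rather than a $1$-groupoid, with Corollary \ref{gooddescent} ensuring that $(D_*\fX)^{\sharp}$ is presented by the levelwise enhanced hypergroupoid.

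The main obstacle will be the \'etaleness verification in the base case: proving that the relative cotangent complex of $\pi$ is acyclic, i.e.\ that the Chevalley--Eilenberg differential exactly trivialises the contribution of the smooth relative tangent bundle of $X_0 \to \fX$. This is the technical heart of Remark \ref{etrmk}, and must be carried out with the explicit description of $\Omega^1$ recorded before Lemma \ref{etlemma}. A secondary difficulty is bookkeeping the homotopy-colimit nature of stackification: since $D_*$ is a homotopy limit it does not commute with the colimits presenting $\fX$ on the nose, so one must lean on Corollary \ref{gooddescent} throughout to know that $(D_*\fX)^{\sharp}$ really is the quotient of $[X_0/\g]$ by the enhanced \'etale groupoid, and that fibre products and atlases may legitimately be computed levelwise.
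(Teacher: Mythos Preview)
Your strategy is on the right track for $1$-stacks, but it has a structural gap: you invoke Corollary \ref{gooddescent} repeatedly, both in the inductive step (to compute the self-fibre product of the atlas) and to justify that $(D_*\fX)^{\sharp}$ is presented by your enhanced groupoid. In the paper, however, Corollary \ref{gooddescent} is deduced \emph{from} Theorem \ref{mainthm}, not fed into it --- its proof opens by citing the resolution and presentation established in the proof of the theorem. So as written, your argument is circular.

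The missing ingredient is the left adjoint $D^*$ of the denormalisation functor $D$. With it in hand, the identification you need is immediate by adjunction: for any simplicial affine resolution $X_{\bt}$ of $\fX$, unwinding Definition \ref{Dlowerdef} shows directly that $(D_*\fX)^{\sharp}$ is the hypersheafification of the simplicial object $n \mapsto \Spec D^*O(X^{\Delta^n})$, with no appeal to descent. This is the route the paper takes: rather than constructing the atlas as a Chevalley--Eilenberg algebra and then checking after the fact that it presents the right object, one starts from an Artin $(n+1)$-hypergroupoid $X_{\bt}$ and applies $D^*$ levelwise. The Artin hypergroupoid conditions on the partial matching maps of $X_{\bt}$ (smooth surjections) translate, via the effect of $D$ on square-zero and contractible square-zero extensions, precisely into the \'etaleness and geometricity conditions of Definition \ref{etdef} for the corresponding maps of $\Spec D^*O(X^{\Delta^{\bt}})$. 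This handles all $n$ uniformly without induction, and your $[X_0/\g]$ reappears as the explicit description of $\Spec D^*O(X)$ in the $1$-groupoid case (cf.\ Example \ref{DstarYoverG}). The Lie-algebroid picture you sketch is thus the $n=1$ shadow of the general construction, but it does not extend cleanly to $n \ge 2$, where $X_0 \by_{\fX} X_0$ is no longer a scheme and there is no classical Lie algebroid available.
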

\begin{proof}[Proof (sketch).]
We follow the construction of \cite[\S \ref{poisson-stackyCDGAsn}]{poisson}. The essential step is given by \cite[Theorem \ref{stacks2-relstrict}]{stacks2} (see also \cite[Theorem \ref{stacksintro-strict}]{stacksintro}), which provides an Artin $(n+1)$-hypergroupoid $X_{\bt}$ resolving  $\fX$ ---  this is a simplicial scheme  with all the partial matching maps (and hence all the face maps $\pd_i$) being smooth surjections, and each $X_n$ being a disjoint union of affine schemes. When $\fX$ has affine diagonal (corresponding to $0$-geometric in the later versions of \cite{hag2}), then this resolution is straightforward to construct as a \v Cech nerve, but beware that the higher cases are fairly non-trivial: if $\fX$ is $n$-geometric (so  $\fX \to \fX^{h S^n}$ is affine), then the construction has $2^{n+1}-1$ steps.

We first address the case where $\fX$ is strongly quasi-compact in the sense of \cite{hag2}, meaning that all the higher diagonals $\fX \to \fX^{h S^n}$ are quasi-compact for $n \ge -1$ (where $S^{-1}=\emptyset$). Then we may assume that each scheme $X_n$ is affine, giving us a cosimplicial $R$-algebra $O(X)$. The denormalisation functor $D$ has a left adjoint $D^*$ as in Definition \ref{Dstardef}, and unwinding Definition \ref{Dlowerdef} it follows that $(D_*\fX)$ is equivalent to the hypersheafification of the simplicial object $\Spec D^*O(X^{\Delta^{\bt}})$ given by 
\[
 n \mapsto \Spec D^*O(X^{\Delta^n}),
\]
 for $X^{\Delta^n}$ as in Definition \ref{XKdef}. 

Looking at the effect of the functor $D$ on square-zero extensions and on contractible square-zero extensions, together with the Artin hypergroupoid conditions from \cite[Definition \ref{stacks2-nptreldef}]{stacks2} or \cite[Definition \ref{stacksintro-npdef}]{stacksintro} on the resolution $X$, it follows that the partial matching maps of $\Spec D^*O(X^{\Delta^{\bt}})$  are \'etale while the matching maps are geometric, in the sense of Definition \ref{etdef}. In particular, this means that each  $\Spec D^*O(X^{\Delta^{n}})$ is a stacky affine, and in the terminology of \cite{stacks2}, the simplicial object $\Spec D^*O(X^{\Delta^{\bt}})$ is an $(n+1,\et)$-hypergroupoid in stacky affines via \cite[Lemma \ref{stacks2-powers}]{stacks2}. By \cite[Proposition \ref{stacks2-easy}]{stacks2}, this implies that $(D_*\fX)$ is $(n, \et)$-geometric, with $\Spec D^*O(X) \to \fX$ being an \'etale $n$-atlas.

If $\fX$ is not strongly quasi-compact, we take the simplicial diagram $X_{\bt}$ of disjoint unions of affine schemes, and write $X_0= \coprod_{\alpha} U_{\alpha}$. For $B \in DG^+\CAlg(R)$, a morphism $\Spec DB \to X_{\bt}$ of simplicial schemes always factors through the completion of $X_{\bt}$ along the iterated degeneracy map  $\uline{\sigma} \co X_0\to X_{\bt}$, because the natural map $DB \to B^0$ is a nilpotent extension on each level. In particular, each such morphism factors through the localisation at $X_0$, and the constructions above adapt if we replace each instance of $\Spec D^*O(X)$ with the coproduct $\coprod_{\alpha} \Spec D^*\Gamma(U_{\alpha}, \sigma_{\bt}^{-1}\sO_X)$, with a similar replacement for each $\Spec D^*O(X^{\Delta^n})$ based on affine covers of each $X_n$.
\end{proof}

\begin{example}\label{atlasYoverG}
For a linear algebraic group $G$ acting on an affine scheme $Y$, we can apply the theorem to the quotient stack $[Y/G]$, in which case we just recover the description of Example \ref{DstarYoverG}. Explicitly,
 an \'etale atlas for $D_*[Y/G]$ is given by the stacky affine $[Y/\g]$, and the associated simplicial resolution is the nerve of the groupoid $([(Y \by G)/(\g \oplus \g)] \Rightarrow  [Y/\g])$ of stacky affines.
\end{example}

In the following Corollary, note that if the target  $F$ is also an Artin $n$-stack then the hypersheafification is redundant by Lemma \ref{sharplemma}:
\begin{corollary}\label{gooddescent}
For any simplicial \'etale hypersheaf  $F$ on affine $R$-schemes and any Artin $n$-stack $\fX$, the canonical natural transformation 
\[
 \map(\fX,F) \to  \map(D_*\fX  , (D_*F)^{\sharp})
\]
of simplicial mapping spaces is a weak equivalence. This space is moreover equivalent to the homotopy limit of $D_*F$ evaluated on the simplicial category of stacky affines \'etale over $D_*\fX$. 
\end{corollary}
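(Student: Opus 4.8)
The plan is to compute both mapping spaces as homotopy limits over the Artin hypergroupoid resolution supplied by Theorem \ref{mainthm}, and then to match them term by term using the adjunction $D^* \dashv D$ together with the definition of $D_*$. First I would fix an Artin $(n+1)$-hypergroupoid $X_{\bt}$ resolving $\fX$ by disjoint unions of affine schemes, as in the proof of Theorem \ref{mainthm}, so that $\fX$ is the hypersheafification of $X_{\bt}$. Since $F$ is a hypersheaf on affine $R$-schemes and each $X_p$ is a disjoint union of affines, this gives $\map(\fX,F) \simeq \holim_{p \in \Delta} F(X_p)$. On the other side, Theorem \ref{mainthm} presents $(D_*\fX)^{\sharp}$ as the hypersheafification of the $(n+1,\et)$-hypergroupoid $\Spec D^*O(X^{\Delta^{\bt}})$ in stacky affines, so that for the hypersheaf $(D_*F)^{\sharp}$, using that $\map(\Spec A, (D_*F)^{\sharp}) \simeq (D_*F)^{\sharp}(A)$ for representable stacky affines, I obtain
\[
 \map((D_*\fX)^{\sharp},(D_*F)^{\sharp}) \simeq \holim_{m \in \Delta} (D_*F)^{\sharp}(\Spec D^*O(X^{\Delta^m})).
\]
Granting that $D_*F$ is already a hypersheaf for the \'etale topology of Definition \ref{sharpdef} (which I establish below), the terms here may be replaced by $(D_*F)(D^*O(X^{\Delta^m}))$.

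The crux is then a natural term-by-term equivalence
\[
 (D_*F)(D^*C^{\bt}) \simeq \holim_{\Delta} F(C^{\bt})
\]
for every cosimplicial $R$-algebra $C^{\bt}$ with $\Spec D^*C^{\bt}$ a stacky affine, where the right-hand side is $F$ applied level-wise to the simplicial affine scheme $\Spec C^{\bt}$. Unwinding Definition \ref{Dlowerdef}, the left-hand side is $\holim_{k} F(D^kD^*C^{\bt})$, i.e.\ $F$ applied to the cosimplicial algebra $DD^*C^{\bt}$, and the adjunction unit $C^{\bt} \to DD^*C^{\bt}$ induces the comparison map; the content is that this map is inverted after applying $F$ and $\holim_{\Delta}$. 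This compatibility is exactly what underlies the calculations of \cite[\S \ref{poisson-stackyCDGAsn}]{poisson}, combined with the explicit left adjoint $D^*$ of \cite[Definition \ref{DQDG-Dstardef}]{DQDG}. I expect this to be the main obstacle, since one must control the interaction of the denormalisation built into $D_*$ with the given cosimplicial resolution, via a (generalised) Eilenberg--Zilber argument. Applying the equivalence with $C^{\bt}=O(X^{\Delta^m})$ and assembling over $m$ yields $\map((D_*\fX)^{\sharp},(D_*F)^{\sharp}) \simeq \holim_{m \in \Delta} F(X^{\Delta^m})$. Finally, both $X_{\bt}$ and the bisimplicial scheme $(m,i) \mapsto (X^{\Delta^m})_i$ are hypergroupoid resolutions of $\fX$ over affines, the latter being the cotensor resolution of the former (using that each $\Delta^m$ is contractible and $X_{\bt}$ is fibrant), so since $F$ is a hypersheaf both compute $\map(\fX,F)$; hence $\holim_{m} F(X^{\Delta^m}) \simeq \holim_{p} F(X_p) \simeq \map(\fX,F)$, which identifies the two mapping spaces.

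It remains to justify the descent property of $D_*F$ and to deduce the ``moreover'' clause. As in \cite[\S \ref{poisson-stackyCDGAsn}]{poisson}, I would check that $D_*F$ is a hypersheaf for the stacky-affine \'etale topology: along a cover $A \to B$ the level-wise maps $D^kA \to D^kB$ remain faithfully flat once one accounts for the free generators in positive degree (Lemma \ref{etlemma}), so descent for the hypersheaf $F$ on affines propagates through $D_*F(-)=\holim_k F(D^k-)$. Consequently $(D_*F)^{\sharp}(\Spec A) \simeq (D_*F)(A)$ on every stacky affine in the small \'etale site over $(D_*\fX)^{\sharp}$, which is what was granted above. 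The standard identification of the derived global sections of a hypersheaf with the homotopy limit over a generating site, here realised concretely by the \'etale atlas $\Spec D^*O(X) \to (D_*\fX)^{\sharp}$ of Theorem \ref{mainthm}, then exhibits $\map((D_*\fX)^{\sharp},(D_*F)^{\sharp})$ as the homotopy limit of $D_*F$ over the simplicial category of stacky affines \'etale over $(D_*\fX)^{\sharp}$. For $\fX$ not strongly quasi-compact, one runs the same argument after replacing $\Spec D^*O(X)$ by the coproduct $\coprod_{\alpha}\Spec D^*\Gamma(U_{\alpha},\sigma_{\bt}^{-1}\sO_X)$ used at the end of the proof of Theorem \ref{mainthm}.
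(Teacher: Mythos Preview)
Your outline is essentially the same strategy as the paper's: take the Artin hypergroupoid resolution $X_\bt$ from Theorem~\ref{mainthm}, express both mapping spaces as homotopy limits over it, and compare via the $D^*\dashv D$ adjunction. The paper packages this more tightly by reducing both assertions simultaneously to the single $F$-independent statement that
\[
 X \to \diag \Spec DD^*O(X^{\Delta^\bt})
\]
is a weak equivalence of simplicial presheaves, and then proving this via explicit contracting homotopies arising because $D$ and $D^*$ descend to functors between graded-commutative algebras and almost-cosimplicial algebras, making each $D^j(D^*O((X^{\Delta^\bt})_i)^{\#}) \to O(X_i)$ a simplicial deformation retract. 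Your term-by-term claim $(D_*F)(D^*C^\bt)\simeq \holim_\Delta F(C^\bt)$ is this same statement seen through a fixed $F$, and the ``generalised Eilenberg--Zilber argument'' you anticipate is precisely what those contracting homotopies provide. The paper's formulation buys you two things: the second (``moreover'') assertion falls out of the same reduction without a separate argument, and you avoid having to check that $D_*F$ is already a hypersheaf.

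On that last point, your justification for \'etale descent of $D_*F$ is too quick. Even granting that each $D^kA\to D^kB$ is faithfully flat, descent for the hypersheaf $F$ along that map yields $F(D^kA)\simeq \holim_n F\bigl((D^kB)^{\ten_{D^kA}(n+1)}\bigr)$, whereas what you need is $F(D^kA)\simeq \holim_n F\bigl(D^k(B^{\ten_A(n+1)})\bigr)$; since $D$ is only lax monoidal via the shuffle product, these two cosimplicial diagrams of rings are not the same, so ``descent propagates through $\holim_k$'' does not follow from flatness alone. This is not fatal --- the contracting homotopy argument in the paper (equivalently, working at the presheaf level before sheafification) is exactly what closes this gap --- but as written your descent paragraph does not go through, and you should either invoke the deformation-retract argument explicitly or reorganise along the paper's lines to avoid the issue.
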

\begin{proof}
 As in the proof of  Theorem \ref{mainthm}, $\fX$ admits an Artin $(n+1)$-hypergroupoid resolution $X_{\bt}$, and  $D_*\fX$ is equivalent to the hypersheafification of a  simplicial object $\Spec D^*O(X^{\Delta^{\bt}})$ (with a slight variation in the non-strongly quasi-compact case). As in \cite[Corollary \ref{poisson-gooddescent}]{poisson}, both of our assertions then reduce to the statement that the map
\[
X \to  \diag \Spec DD^*O(X^{\Delta^{\bt}})
\]
is a weak equivalence of simplicial presheaves. 

This last statement follows from \cite[Proposition \ref{poisson-replaceprop}]{poisson} (or rather its proof, the fibrant hypotheses not being strictly necessary). The key idea is that $D$ and $D^*$ descend to functors between graded-commutative algebras and almost cosimplicial commutative algebras, giving rise for each $j$ to  contracting homotopies
\[
 D^j(D^*O((X^{\Delta^{\bt}})_i)^{\#})\to  D^j(D^*O((X^{\Delta^{\bt}})_i)^{\#})^{\Delta^1_i}
\]
making the natural map  $  D^j(D^*O((X^{\Delta^{\bt}})_i)^{\#})\to O(X_i)$ a deformation retract, functorial in $i$ with respect to the simplicial operations.
\end{proof}

\subsection{Quasi-coherent sheaves and torsors}\label{qcohsn}

We can apply Corollary \ref{gooddescent} to stacks $F$ such as $B\GL_n$, $BG$, the stack of quasi-coherent sheaves or the higher stack of perfect complexes. We now give the resulting  characterisations of the groupoids of vector bundles, of $G$-torsors, of quasi-coherent sheaves and of perfect complexes on $\fX$, all in terms of data on the site of stacky affines \'etale over  $D_*\fX$. 

\begin{example}[Vector bundles]\label{BGLnex2}
Following Example \ref{BGLnex}, 
if we define a  vector bundle on a stacky affine $\Spec B$ to be a pair $(M,\nabla)$ for $M$ a projective $B^0$-module of rank $n$ and $\nabla \co M \to B^1\ten_{B^0}M$ a flat $\pd_B$-connection, then it follows from Corollary \ref{gooddescent} applied to $\map(\fX,B\GL_n)$  that the groupoid of vector bundles on an Artin stack $\fX$ is equivalent to the groupoid of vector bundles on the site of stacky affines \'etale over $D_*\fX$.
\end{example}

\begin{example}[Torsors]
Following   Example \ref{BGex},
if for an arbitrary linear algebraic group $G$ we define a $G$-torsor  on a stacky affine $\Spec B$ to be a
 pair $(P,\nabla)$ for $P$ a $G$-torsor over $B^0$ and $\nabla \co O(P)\to O(P)\ten_{B^0}B^1$ a $G$-equivariant flat connection, then it follows from Corollary \ref{gooddescent}  applied to $\map(\fX,BG)$ that the groupoid of $G$-torsors on an Artin stack $\fX$ is equivalent to the groupoid of $G$-torsors on the site of stacky affines \'etale over $D_*\fX$.
 \end{example}

\begin{definition}\label{Cartdef}
Given a  stacky affine $\Spec B$, define a Cartesian $B$-module to be a $B$-module $M$ in cochain complexes for which the natural map
\[
 M^0\ten_{B^0}B^{\#} \to M^{\#} 
\]
of graded $B^{\#}$-modules is an isomorphism.
\end{definition}
Equivalently, we could characterise a  Cartesian $B$-module as a pair $(M^0,\nabla)$, for a $B^0$-module $M$ and a flat $\pd_B$-connection $\nabla \co M^0 \to M^0\ten_{B^0}B^1$. Given $M$, the associated pair is $(M^0, \pd_M\co M^0 \to M^1)$.

\begin{example}[Quasi-coherent sheaves]\label{QCohex}
Applying Corollary \ref{gooddescent} to the stack of quasi-coherent sheaves, the reasoning of Example \ref{BGLnex2} adapts to show that the groupoid of  quasi-coherent sheaves  on an Artin stack $\fX$ is equivalent to the groupoid of Cartesian modules on the site of stacky affines \'etale over $D_*\fX$. Looking at stacks of morphisms of  quasi-coherent sheaves, we can then conclude the the category of  quasi-coherent sheaves  on an Artin stack $\fX$ is equivalent to the category of quasi-coherent Cartesian $\sO$-modules on the site of stacky affines \'etale over $D_*\fX$.
 \end{example}

\begin{definition}\label{quCartdef}
Given a  stacky affine $\Spec B$, define a quasi-Cartesian $B$-complex to be a $B$-module $M=\bigoplus_{i,n} M^n_i$ in chain  cochain complexes for which the natural map
\[
 M^0\ten_{B^0}B^{\#} \to M^{\#} 
\]
of graded $B^{\#}$-modules is an quasi-isomorphism of graded chain complexes.
\end{definition}

\begin{example}[Quasi-coherent complexes]\label{perex}
Following \cite[\S 5.4.2]{stacks2},  we can define  a quasi-coherent complex on an Artin stack $\fX$ to be a presheaf $\sF_{\bt}$ of $\sO_{\fX}$-modules whose homology \emph{presheaves} $\H_i(\sF_{\bt})$ are all quasi-coherent. Then Example \ref{QCohex} generalises to the statement that the $\infty$-category of  bounded below quasi-coherent chain complexes on $\fX$ is equivalent to the category of homologically bounded below quasi-coherent quasi-Cartesian $\sO$-modules on the site of stacky affines \'etale over $D_*\fX$, localised at levelwise quasi-isomorphisms. 

A similar statement holds for perfect complexes; as observed in  \cite[Proposition \ref{DQnonneg-Perprop2}]{DQnonneg}, it follows by combining \cite[Proposition \ref{stacks2-qcohequiv}]{stacks2} with \cite[Lemma \ref{poisson-denormmod} and Corollary \ref{poisson-gooddescent}]{poisson}. 
\end{example}

\section{Tangent modules, Poisson structures, differential operators 
and quantisations}\label{consequencesn}

\subsection{Cotangent and tangent complexes}

\subsubsection{Cotangent complexes and symplectic structures}

On a smooth Artin $n$-stack $\fX$, the cotangent complex $\bL_{\fX}$ is a perfect complex of amplitude $n$. 

From the constructions of \cite[\S \ref{stacks2-cotsn}]{stacks2} and \cite[Proposition \ref{poisson-tgtcor2}]{poisson}, it follows that the  complex of derived global sections $\oR\Gamma(\fX, \L^p\bL_{\fX})$ is quasi-isomorphic to the complex of derived global sections of the presheaf $B \mapsto \Omega^p_B$ on the  site $(D_*\fX)_{\et}$  of stacky affines $\Spec B$ \'etale over $D_*\fX$.

We can also apply this to the de Rham complex and its filtered pieces, so as in \cite[Lemma 3.25]{poisson}, 
\[
 F^p\DR(\fX) \simeq \oR\Gamma((D_*\fX)_{\et}, F^p\Omega^{\bt}_{\sO}),
\]
while the space $\cA^{p,cl}_R(\fX,n)$ of closed $p$-forms of degree $n$ from \cite{PTVV} is just the Dold-Kan denormalisation of the good truncation in non-positive degrees of 
\[
 \oR\Gamma((D_*\fX)_{\et}, F^p\Omega^{\bt}_{\sO})[n+p].
\]
In particular, for $p=2$, the space of $n$-shifted symplectic structures \cite{PTVV} 
then consists  of the non-degenerate elements in this space (for $\Z/2$-graded manifolds, these were previously introduced in  \cite{KhudaverdianVoronov} as homotopy symplectic structures).  

\subsubsection{Tangent complexes}\label{tgtsn}

On a stacky affine $\Spec B$, we can use the internal $\Hom$-functor  $\cHom_B$ for $B$-modules in complexes to give a complex $\cHom_B(\Omega^1_B,B)$ of $B$-modules. When $B^0$ is smooth, and $B\to C$ is \'etale in the sense of  Definition \ref{etdef}, note that the canonical map
\[
 \cHom_B(\Omega^1_B,B)\ten_BC \to \cHom_C(\Omega^1_C,C)
\]
is a quasi-isomorphism. By \cite[Proposition \ref{poisson-tgtcor2}]{poisson}, we then have a quasi-isomorphism 
\[
 \oR\HHom_{\sO_{\fX}}(\bL_{\fX},\sO_{\fX}) \simeq  \oR\Gamma((D_*\fX)_{\et}, \cHom_{\sO}(\Omega^1_{\sO},\sO)),
\]
and indeed 
\[
 \oR\HHom_{\sO_{\fX}}(\bL_{\fX}^{\ten p},\sO_{\fX}) \simeq  \oR\Gamma((D_*\fX)_{\et}, \cHom_{\sO}((\Omega^1_{\sO})^{\ten p},\sO)).
\]

\subsection{Poisson structures and quantisations}\label{poissonquantsn}

The expressions in \S \ref{tgtsn} 
 lead to well-behaved complexes of polyvectors and shifted polyvectors on $\fX$, which in particular carry Schouten--Nijenhuis  brackets, permitting the formulation of shifted Poisson structures \cite[\S \ref{poisson-bipoisssn}]{poisson}. In brief, an $n$-shifted Poisson structure on a stacky affine $\Spec B$ is an enrichment of the CDGA structure on $B$ to a strong homotopy $P_{n+1}$-algebra structure, and an $n$-shifted Poisson structure on $\fX$ is an  $\infty$-functorial choice of $n$-shifted Poisson structure for each stacky affine \'etale over $D_*\fX$. 
 
 The equivalence between $n$-shifted symplectic structures and non-degenerate $n$-shifted Poisson structures in this sense is then given by \cite[Theorem \ref{poisson-Artinthm}]{poisson}, by solving a tower of obstruction problems; a non-homotopical forerunner of that construction appears for graded manifolds in \cite{KhudaverdianVoronov}, interpreted as a Legendre transformation. Over a Noetherian base ring, shifted Poisson structures in our sense are expected to coincide with the more involved formulation  of \cite{CPTVV}; the correspondence between symplectic and non-degenerate Poisson structures is also established  in \cite{CPTVV}, using a less direct method.

Hochschild complexes and rings of differential operators admit canonical filtrations, whose associated gradeds are complexes of (shifted) polyvectors. They also therefore satisfy functoriality with respect to the \'etale morphisms of Definition \ref{etdef}, and deformation quantisations can be formulated in terms of almost commutative deformations of the structure sheaf $\sO$ on $(D_*\fX)_{\et}$. Via Example \ref{perex}, $0$-shifted quantisations then lead to deformations of the dg category of perfect complexes on $\fX$.

As observed in \cite{CPTVV}, for positively shifted structures the  existence of quantisations follows immediately from formality of the $E_{n+1}$-operads, leading to deformations of perfect complexes as an $n$-tuply monoidal dg category. For $1$-shifted Poisson structures on classifying stacks $BG$, deformation quantisations are then related to quantum groups. Our smoothness hypotheses above do not lead to many interesting negatively shifted structures; in particular, $n$-shifted symplectic structures on smooth Artin $N$-stacks only exist for $n\ge 0$. There are also  many quantisation results for negatively shifted structures on singular stacks when regarded as derived stacks; see \S \ref{derivedquantsn}.

A shifted co-isotropic structure on a morphism  $\Spec B \to \Spec A$  of stacky affines can be formulated in terms of an $n$-shifted Poisson structure on $A$, an $n-1$-shifted Poisson structure on $B$ and a homotopy $P_{n+1}$-algebra morphism from $A$ to twisted shifted polyvectors on $B$  (cf. \cite{CPTVV,MelaniSafronovI}). For $n\ge 2$, quantisations of such structures are established in \cite{MelaniSafronovII}  and \cite{DQpoisson} covers $n=1$, while  for $n=0$, quantisations are established in non-degenerate cases (i.e. $0$-shifted Lagrangians) in \cite{DQLag}.

\section{Generalisations: derived and analytic structures}

\subsection{Derived Artin stacks}\label{derivedsn}

In order to simplify the exposition, we have restricted our attention so far to higher stacks and assumed smoothness, avoiding any derived structure. The various results cited above are all formulated in the more general setting of derived stacks, and we now briefly explain what extra structure has to be introduced and where the subtleties lie.

\subsubsection{Stacky CDGAs}

We will be working systematically with chain cochain complexes $V$,  which are bigraded abelian groups $V= \bigoplus_{i,j}V^i_j$, equipped with square-zero linear maps $\pd \co V^i_j \to V^{i+1}_j$ and $\delta \co V^i_j \to V^i_{j-1}$ such that $\pd\delta + \delta \pd =0$.

The following is taken from \cite[Definition 3.2]{poisson}:
\begin{definition}
We define a  stacky CDGA  to be a chain cochain complex $A$ of $\Q$-vector spaces equipped with a commutative product $A\ten A \to A$ and unit $\Q \to A$. 

We  regard all chain complexes as   chain cochain  complexes $V= V^0_{\bt}$.
Given a chain  CDGA $R$, a stacky CDGA over $R$ is then a morphism $R \to A$ of stacky CDGAs. We write $DG^+dg_+\CAlg(R)$ for the category consisting of stacky CDGAs $A$ over $R$ concentrated in non-negative bidegrees (i.e.  $A^i_j=0$ unless $i,j \ge 0$).

For $A \in DG^+dg_+\CAlg(R)$, we denote by $\Spec A$ the corresponding object of the opposite category  $DG^+dg_+\CAlg(R)^{\op}$; 
we also write $A=O(\Spec A)$.
\end{definition}

There are obvious analogues of Examples \ref{exYoverg} and \ref{exOmega}, giving stacky CDGAs $O([Y/\g])$ and  $\Omega^{\bt}_{Y/R}$ for non-negatively graded chain CDGAs $O(Y)$; see \cite[Example \ref{poisson-DstarBG}]{poisson} for details of $[Y/\g]$. These behave well when 
$O(Y)_0$ is smooth and $O(Y)_{\#}$ is freely generated over $O(Y)_0$ by a graded projective module.

\begin{example}\label{Hamex}
 A source of motivating examples of such structures is given by an infinitesimal version of the derived Hamiltonian reduction $[\oR\mu^{-1}(0)/G]$ of \cite{calaqueLagrangian}. Take a smooth affine scheme $Y$ equipped with the action of an algebraic group $G$, together with a $G$-equivariant morphism $\mu \co Y \to \g^*$. We can then form the derived vanishing locus $\oR \mu^{-1}(0)$ of $Y$, whose functions form the chain CDGA 
\[
 O(\oR\mu^{-1}(0)):= (O(Y) \xla{\delta} O(Y) \ten \g \xla{\delta} O(Y) \ten \Lambda^2\g \la \ldots). 
\]
This carries a natural $G$-action (i.e.\ $O(G)$-coaction), and hence an action of the Lie algebra $\g$, and then we have a stacky CDGA $O([\oR\mu^{-1}(0)/\g])$, a double complex given by taking the Chevalley--Eilenberg cochain complex of $\g$ with coefficients in the chain complex $O(\oR\mu^{-1}(0))$. Hamiltonian structures on $\mu$ then give rise to $0$-shifted symplectic structures on $[\oR\mu^{-1}(0)/G]$ and $[\oR\mu^{-1}(0)/\g]$ by derived Lagrangian intersection.
\end{example}

\begin{remark}\label{cfCPTVV} 
Stacky CDGAs also arise as a step in the formulation of Poisson structures in 
\cite{CPTVV},  where they are called ``graded mixed cdgas'' --- see \cite[Remark 3.32]{poisson} for some more details.
\end{remark}

\begin{definition}
 Say that a morphism $U \to V$ of chain cochain complexes is a levelwise quasi-isomorphism if the map $U^i \to V^i$ of chain complexes is a quasi-isomorphism for all $i$. Say that a morphism of stacky CDGAs is a levelwise quasi-isomorphism if the underlying morphism of chain cochain complexes is so.
\end{definition}

The following adapts \cite[Lemma \ref{poisson-bicdgamodel}]{poisson}
\begin{lemma}\label{bicdgamodel}
There is a cofibrantly generated model structure on $DG^+dg_+\CAlg(R)$ in which fibrations are surjective in strictly positive chain degrees  and weak equivalences are levelwise quasi-isomorphisms. 
\end{lemma}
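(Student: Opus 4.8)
The plan is to construct the model structure by transfer along the forgetful functor from stacky CDGAs to the underlying category of chain cochain complexes, exactly as in the proof of the cited Lemma~\ref{poisson-bicdgamodel} that this statement adapts. First I would fix the target model structure on chain cochain complexes $V = \bigoplus_{i,j} V^i_j$ (concentrated in non-negative bidegrees): the weak equivalences are the levelwise quasi-isomorphisms (quasi-isomorphism of chain complexes $V^i_\bullet$ for each fixed cochain degree $i$), and the fibrations are the maps that are surjective in strictly positive chain degrees. This is the projective model structure on non-negatively graded chain complexes applied levelwise in the cochain direction, so its existence and cofibrant generation are standard; the generating (trivial) cofibrations are the evident bigraded analogues of the sphere/disk inclusions, placed in a single cochain degree $i$.

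Next I would set up the free--forgetful adjunction between $DG^+dg_+\CAlg(R)$ and this category of chain cochain $R$-modules, with left adjoint the free graded-commutative $R$-algebra functor, and invoke the standard transfer criterion (Quillen's path-object argument, as in the reference). The transferred structure declares a morphism of stacky CDGAs to be a weak equivalence or fibration precisely when its underlying map of chain cochain complexes is so; this immediately yields the asserted descriptions, and cofibrant generation is inherited, with generating (trivial) cofibrations obtained by applying the free functor to the generators downstairs. The two hypotheses to verify are that the forgetful functor preserves filtered colimits (clear, since colimits of algebras are computed on underlying modules in the relevant degrees) and the acyclicity condition: every relative cell complex built from the images of the generating trivial cofibrations is a weak equivalence.

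The main obstacle is exactly this acyclicity step, which is where the non-negative chain grading and the characteristic-zero hypothesis do real work. The point is that pushing out a free algebra along a generating trivial cofibration adjoins polynomial generators on an acyclic chain complex sitting in a fixed cochain degree, and one must check that the resulting map is still a levelwise quasi-isomorphism. I would handle this by a filtration-by-word-length argument on the free algebra, reducing to showing that symmetric powers $\Sym^k$ of a levelwise-acyclic, chain-degree-bounded-below complex remain levelwise acyclic; over $\Q$ the symmetric power of an acyclic complex is acyclic (the K\"unneth theorem together with averaging over $\Sigma_k$, which is exact rationally), and the non-negative chain grading guarantees the relevant sums are locally finite so no convergence issue arises. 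The path-object needed for the transfer is the Dold--Kan-style cylinder/path construction on chain cochain complexes, tensoring with the normalised interval in the chain direction levelwise; since the model category of chain complexes over $\Q$ admits functorial path objects for algebras, this transports to the bigraded setting without difficulty.

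Finally I would note that the whole argument is manifestly the bigraded refinement of the singly-graded statement in \cite{poisson}: replacing the cochain-graded CDGAs there with chain cochain bigraded stacky CDGAs changes only bookkeeping of indices, and the essential inputs---the transfer principle, rational exactness of symmetric powers, and boundedness below in the chain direction---are identical. Thus the proof reduces to checking that each step of the cited proof survives the introduction of the second (chain) grading, which it does because all constructions are levelwise in the cochain degree.
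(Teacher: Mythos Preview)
Your proposal is correct and matches the intended approach: the paper does not actually give a proof of this lemma, merely stating that it ``adapts \cite[Lemma \ref{poisson-bicdgamodel}]{poisson}'', and the transfer argument you outline is exactly the standard method that reference uses. Your identification of the key inputs --- the free--forgetful adjunction, the path-object/small-object transfer criterion, and rational exactness of symmetric powers to handle acyclicity of transfinite compositions of generating trivial cofibrations --- is accurate, and your observation that everything is levelwise in the cochain direction is precisely why the singly-graded argument carries over unchanged.
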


The following appear as \cite[Definitions 3.7 and 3.8]{poisson}. The functor $\widehat{\Tot}$ corresponds to the Tate realisation of \cite{CPTVV}.
\begin{definition}
 Given a chain cochain complex $V$, define the cochain complex $\widehat{\Tot} V \subset \Tot^{\Pi}V$ by
\[
(\widehat{\Tot} V)^m := (\bigoplus_{i < 0} V^i_{i-m}) \oplus (\prod_{i\ge 0}   V^i_{i-m})
\]
with differential $\pd \pm \delta$.
\end{definition}

\begin{definition}
 Given $A$-modules $M,N$ in chain cochain complexes, we define  internal $\Hom$ spaces
$\cHom_A(M,N)$  by
\[
 \cHom_A(M,N)^i_j=  \Hom_{A^{\#}_{\#}}(M^{\#}_{\#},N^{\#[i]}_{\#[j]}),
\]
with differentials  $\pd f:= \pd_N \circ f \pm f \circ \pd_M$ and  $\delta f:= \delta_N \circ f \pm f \circ \delta_M$,
where $V^{\#}_{\#}$ denotes the bigraded vector space underlying a chain cochain complex $V$. 

We then define the  $\Hom$ complex $\widehat{\HHom}_A(M,N)$ by
\[
 \widehat{\HHom}_A(M,N):= \widehat{\Tot} \cHom_A(M,N).
\]
\end{definition}

The reason for working with the functors $\widehat{\Tot}$ and $\widehat{\HHom}$ is that they send levelwise quasi-isomorphisms to quasi-isomorphisms. They also behave well with respect to tensor products.

\subsubsection{\'Etale morphisms}\label{detsn}

The definitions of \ref{etsn} now adapt to stacky CDGAs:

\begin{definition}
 We say that a morphism $f \co C' \to C$ in $DG^+dg_+\CAlg(R)$ is a square-zero extension if $f$ is surjective in every level and the kernel $I$ is a square-zero ideal. 
 
We say that $f$ is a contractible square-zero extension if in addition $I$ admits a $C'$-linear (or, equivalently, $C$-linear) contracting cochain homotopy compatible with the chain maps. Explicitly, this means that we have  maps $h \co I^n_j \to I^{n+1}_j$ for all $n,j$, satisfying $h\circ h=0$, $h\circ \pd +\pd\circ h =\id$ and $h\circ\delta +\delta\circ h=0$.  
\end{definition}

We will refer to functors as \emph{homotopy-preserving} if they preserve weak equivalences.

\begin{definition}\label{fetdef2}
Given homotopy-preserving functors $F,G$ from $dg_+DG^+\CAlg(R)$ to  simplicial sets, and a natural transformation $\eta \co F \to G$, we say that $\eta$ is homotopy formally  \'etale (resp. homotopy formally geometric)
if the maps
\[
 F(C') \to F(C)\by^h_{G(C)}G(C')
\]
 are $\pi_0$-surjective for all square-zero extensions (resp. all contractible square-zero extensions)
$C' \to C$. 

We say that a functor $F$ on $dg_+DG^+\CAlg(R)$ is homotopy  formally  \'etale  (resp. homotopy formally geometric)
if the transformation $F \to *$ to the constant functor is so. 
 \end{definition}

\begin{definition}\label{etdef2}
We say that  a natural transformation $\eta \co F \to G$ of functors $F,G$ on  $dg_+DG^+\CAlg(R)$ is l.f.p. if for any filtered colimit $C = \LLim_{i \in I} C_i$ in $DG^+\CAlg(R)$, the natural map
\[
 \LLim_{i \in I}F(C_i) \to \LLim_{i \in I}G(C_i)\by_{G(C)}F(C)
\]
is a weak  equivalence. 

We then say that $\eta$ is homotopy \'etale (resp. homotopy geometric)
if it is  homotopy formally  \'etale (resp. homotopy formally geometric)
and l.f.p. 

For $A \in dg_+DG^+\CAlg(R)$, we refer to $\Spec A$ as a stacky affine if the functor $\oR\Spec A := \map_{dg_+DG^+\CAlg(R)}(A,-)$ is geometric.
\end{definition}

Since any stacky CDGA $A \in dg_+DG^+\CAlg(R)$ gives rise to a simplicial set-valued functor $\oR \Spec $, the definitions above give rise to notions of homotopy (formally) 
\'etale maps between objects of $dg_+DG^+\CAlg(R)$. 

Standard obstruction arguments applied to obstruction classes in $\H_{-1}\z^0\cHom_B(\Omega^1_{B/A},M)$ lead to the following lemma, establishing a comparison  with the homotopy formally \'etale morphisms of \cite[\S \ref{poisson-bidescentsn}]{poisson}. 
\begin{lemma}\label{etlemma2}
 A cofibration $f \co A \to B$ is  homotopy formally geometric if and only if
\begin{itemize}
 \item for all $i,n>0$, we have $\H_i((\Omega^1_{B/A}\ten_B\H_0B^0)^n)=0$, and 
\item for all $n>0$, the $\H_0B^0$-module   $\H_0((\Omega^1_{B/A}\ten_B\H_0B^0)^n)$ is projective.
\end{itemize}

The morphism $f$ is  homotopy formally \'etale if and only if it is homotopy  formally geometric and
\begin{itemize}
 \item the inverse system  $\{\Tot \sigma^{\le q} (\Omega_{B/A}^1\ten_{B}\H_0B^0)\}_q$ of complexes of projective $\H_0B^0$-modules is pro-quasi-isomorphic to $0$,  where $\sigma$ denotes brutal truncation in the cochain direction.
\end{itemize}
\end{lemma}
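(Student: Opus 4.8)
The plan is to follow the obstruction-theoretic template already used in the proof of Lemma \ref{etlemma}, but now carried out in the bigraded setting, where the key difference is that square-zero extensions are tested with the chain complex structure intact and so ``surjective in every level'' and ``contractible'' refer to the chain cochain structure. First I would set up the deformation problem: given a square-zero extension $C' \to C$ with kernel $I$ and a commutative square expressing a lift of $A \to C$ to $A \to C'$ together with $B \to C$, the obstruction to lifting $B \to C$ (up to homotopy) to $B \to C'$ lives in a cohomology group built from $\cHom_B(\Omega^1_{B/A}, I)$. Because $f$ is assumed to be a cofibration, I can replace $B$ by a cofibrant model over $A$ and compute derived maps as honest maps out of $B$; the module of relative K\"ahler differentials $\Omega^1_{B/A}$ is then a suitably projective $B$-module in chain cochain complexes, and the standard cotangent-complex formalism identifies the obstruction, existence and uniqueness spaces with $\H_{-1}, \H_0, \H_1$ of the complex $\z^0\cHom_B(\Omega^1_{B/A}, I)$, i.e. the internal Hom taken into $I$ after passing to cochain cocycles in degree $0$.

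The main reduction is to rewrite these obstruction groups in terms that depend only on $I$ through its $\H_0B^0$-module structure in each level, which is what makes the criterion checkable. Since $C' \to C$ is a square-zero extension, $I$ is a module over $C$ and hence over $\H_0B^0$ via the augmentation $B \to \H_0B^0$; unwinding $\z^0\cHom_B(\Omega^1_{B/A}, I)$ and using that the relevant maps must be $B$-linear and annihilate $\pd$ in the appropriate sense, the obstruction, existence and uniqueness data are governed by the chain complexes $(\Omega^1_{B/A}\ten_B \H_0B^0)^n$ in each cochain level $n$. For \emph{homotopy formally geometric} I only have to control \emph{contractible} square-zero extensions, so $I$ is an acyclic (contractible) chain cochain complex and the relevant vanishing is exactly that these levelwise chain complexes have vanishing higher homology and projective $\H_0$, which is the stated two-part condition; the quantifier $n>0$ reflects that level $0$ is handled by the underlying morphism $A^0 \to B^0$ already being geometric, and the $i>0$ versus $i=0$ split distinguishes the obstruction from the parametrisation of lifts.

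For the \emph{homotopy formally \'etale} clause I would then pass from contractible square-zero extensions to \emph{arbitrary} square-zero extensions. The extra input needed is uniqueness of lifts (not merely existence), so I must show that the relevant higher obstruction spaces vanish for all $I$, not just acyclic ones. Filtering an arbitrary $I$ by its cochain brutal truncations $\sigma^{\le q}$ and taking the associated inverse system, the obstructions assemble into a tower, and the total obstruction vanishes precisely when the pro-system $\{\Tot \sigma^{\le q}(\Omega^1_{B/A}\ten_B \H_0B^0)\}_q$ is pro-quasi-isomorphic to $0$; here $\Tot$ collapses the bigrading so that the connecting maps in the Milnor-type $\Lim^1$ exact sequence for the tower can be controlled. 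I expect the main obstacle to be exactly this passage: one must justify interchanging the inverse limit over the filtration with the formation of derived Hom, and verify that $\hat\Tot$ (or $\Tot$ on brutally truncated pieces) does not introduce spurious $\Lim^1$ contributions. Concretely, the delicate point is that for a general square-zero extension the kernel $I$ need not be bounded, so the obstruction ``class'' is really a compatible system of classes, and pro-triviality of the truncated tower is the correct finiteness/vanishing statement that makes all of them die simultaneously; everything else is the routine translation of the classical smooth/\'etale obstruction calculus into the chain cochain setting, reusing the computations of \cite[\S \ref{poisson-bidescentsn}]{poisson}.
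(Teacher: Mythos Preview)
Your approach is essentially the one the paper intends: the paper's entire ``proof'' is the one-line remark that obstruction classes live in $\H_{-1}\z^0\cHom_B(\Omega^1_{B/A},M)$, together with a reference to \cite[\S \ref{poisson-bidescentsn}]{poisson}, so your expansion via the standard cotangent-complex obstruction calculus and the brutal-truncation filtration is exactly what is being asked for.

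There is one slip worth correcting. In the \'etale clause you write that ``the extra input needed is uniqueness of lifts (not merely existence)''. That is not what Definition~\ref{fetdef2} asks: both homotopy formally geometric and homotopy formally \'etale are $\pi_0$-surjectivity conditions, the difference being the class of square-zero extensions tested (contractible versus arbitrary). So the extra input for \'etale is \emph{existence} of lifts for kernels $I$ which are not cochain-contractible, i.e.\ vanishing of the obstruction class in $\H_{-1}\z^0\cHom_B(\Omega^1_{B/A},I)$ for all $I$, not just acyclic ones. Your subsequent argument is unaffected: once you filter an arbitrary $I$ by $\sigma^{\le q}$ and test the obstruction against each truncation, the resulting tower forces the pro-quasi-isomorphism condition on $\{\Tot \sigma^{\le q}(\Omega^1_{B/A}\ten_B \H_0B^0)\}_q$, exactly as you say. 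The $\Lim^1$ concern you flag is the right place to be careful, and is handled in \cite[\S \ref{poisson-bidescentsn}]{poisson} by the boundedness properties of $\hat{\Tot}$ on levelwise quasi-isomorphisms.
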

Note that when $f$ is l.f.p., the final condition reduces to saying that the complexes  $\{\Tot \sigma^{\le q} (\Omega_{B/A}^1\ten_{B}\H_0B^0)\}_q$  of projective $\H_0B^0$-modules are acyclic for $q \gg 0$, because the inverse system stabilises.
 
\subsubsection{\'Etale atlases}
 
The denormalisation functor $D$ extends naturally to give a functor from stacky CDGAs to cosimplicial chain algebras.
Given a functor $F$ from chain algebras $dg_+\CAlg(R)$ to simplicial sets, we may adapt Definition \ref{Dlowerdef} to define a functor $D_*F$ on  $DG^+dg_+\CAlg(R)$ as the homotopy limit
\[
 D_*F(B):= \ho\Lim_{n \in \Delta} F(D^nB).
\]

This leads to the following generalisation of Theorem \ref{mainthm} to the derived setting. This has the same proof as Theorem \ref{mainthm} (with the proof of Lemma \ref{sharplemma} also generalising),  and  gives similar consequences for quasi-coherent complexes.
\begin{theorem}\label{mainthm2}
 Given an  $n$-geometric derived  Artin stack $\fX$ over $R$, the functor $D_*\fX$ is a hypersheaf which is $(n, \et)$-geometric with respect to the \'etale morphisms of Definition \ref{etdef2}.
\end{theorem}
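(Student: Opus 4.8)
The plan is to transport the proof of Theorem \ref{mainthm} wholesale to the derived setting, the only structural change being that the obstruction calculus of Lemma \ref{etlemma} is replaced throughout by its homotopy-invariant counterpart, Lemma \ref{etlemma2}. First I would, following \cite[\S \ref{poisson-stackyCDGAsn}]{poisson} and the derived form of the strictification results of \cite{stacks2,stacksintro}, produce an Artin $(n+1)$-hypergroupoid $X_{\bt}$ resolving the derived Artin stack $\fX$: a simplicial diagram in which each $X_m$ is a disjoint union of derived affine schemes --- so that $O(X_m)$ is a non-negatively graded chain CDGA over $R$ --- with all partial matching maps, and hence all the face maps $\pd_i$, being smooth surjections in the derived sense. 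In the strongly quasi-compact case each $X_m$ may be taken affine, yielding a cosimplicial chain CDGA $O(X)$.

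Next, using the extension of the left adjoint $D^*$ of \cite[Definition \ref{DQDG-Dstardef}]{DQDG} to stacky CDGAs and unwinding the homotopy-limit description of $D_*$ from the analogue of Definition \ref{Dlowerdef}, I would identify $(D_*\fX)^{\sharp}$ with the hypersheafification of the simplicial stacky affine $\Spec D^*O(X^{\Delta^{\bt}})$, exactly as in Theorem \ref{mainthm}. All of this is to be carried out homotopically, inside the model structure of Lemma \ref{bicdgamodel}, which supplies the fibrant replacements needed to make the homotopy limits and homotopy fibre products of Definition \ref{etdef2} meaningful.

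The heart of the argument, and the step I expect to be the main obstacle, is to verify that the partial matching maps of $\Spec D^*O(X^{\Delta^{\bt}})$ are homotopy \'etale and the matching maps homotopy geometric in the sense of Definition \ref{etdef2}. Where Theorem \ref{mainthm} merely inspected the effect of $D$ on (contractible) square-zero extensions, here I would feed the smoothness of the derived hypergroupoid resolution through $D$ and then verify the two conditions of Lemma \ref{etlemma2} on the modules $\Omega^1_{B/A}\ten_B\H_0B^0$ of K\"ahler differentials: the vanishing of the chain homology $\H_i((\Omega^1_{B/A}\ten_B\H_0B^0)^n)$ for $i,n>0$ together with projectivity of $\H_0((\Omega^1_{B/A}\ten_B\H_0B^0)^n)$ over $\H_0B^0$ for each $n>0$, and the pro-quasi-isomorphism to $0$ of the brutally truncated totalisations $\{\Tot \sigma^{\le q}(\Omega^1_{B/A}\ten_B\H_0B^0)\}_q$. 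The genuinely new difficulty, absent in the underived case where everything was controlled up to isomorphism, is the chain direction: smoothness in the derived direction of the $X_m$ must be matched against acyclicity of homology after applying $D$, and the passage from strict isomorphism to levelwise quasi-isomorphism has to be controlled using the fact that $\hat{\Tot}$ and $\hat{\HHom}$ send levelwise quasi-isomorphisms to quasi-isomorphisms.

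With these properties established, the derived analogue of \cite[Lemma \ref{stacks2-powers}]{stacks2} shows that $\Spec D^*O(X^{\Delta^{\bt}})$ is an $(n+1,\et)$-hypergroupoid in stacky affines, whence the derived form of \cite[Proposition \ref{stacks2-easy}]{stacks2} yields that $(D_*\fX)^{\sharp}$ is $(n,\et)$-geometric, with $\Spec D^*O(X) \to \fX$ an \'etale $n$-atlas. Finally I would dispatch the non-strongly-quasi-compact case precisely as in Theorem \ref{mainthm}: any map $\Spec DB \to X_{\bt}$ factors through the completion along the iterated degeneracy $\uline{\sigma}\co X_0 \to X_{\bt}$ since $DB \to B^0$ is a nilpotent extension in each level, so one may replace each $\Spec D^*O(X)$ by the corresponding coproduct of localisations at $X_0$.
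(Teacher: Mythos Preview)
Your proposal is correct and follows the paper's approach; the paper itself simply states that Theorem \ref{mainthm2} ``has the same proof as Theorem \ref{mainthm}'', and your outline reproduces that proof faithfully. The one deviation is your plan to verify homotopy \'etaleness and geometricity of the (partial) matching maps via the K\"ahler-differential criteria of Lemma \ref{etlemma2}, where you anticipate the chain direction as ``the main obstacle''. This detour is unnecessary: the original argument in Theorem \ref{mainthm} works by direct inspection of how $D$ interacts with (contractible) square-zero extensions together with the Artin hypergroupoid lifting conditions on $X_{\bt}$, and this transports verbatim to the derived setting because $D$ acts purely in the cochain direction and leaves the chain structure untouched. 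A (contractible) square-zero extension of stacky CDGAs therefore yields, under $D$, a cosimplicial diagram of square-zero extensions of chain CDGAs with exactly the same triviality properties as in the underived case, so the derived smoothness of the partial matching maps of $X_{\bt}$ supplies the required lifts directly from Definition \ref{fetdef2}, with no need to analyse $\Omega^1_{B/A}\ten_B\H_0B^0$ or to invoke $\hat{\Tot}$ and $\hat{\HHom}$ at this stage.
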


Corollary \ref{gooddescent} also generalises with the same proof: the natural morphism
$
 \map(\fX,F) \to  \map(D_*\fX  , (D_*F)^{\sharp})
$
is an equivalence for all simplicial hypersheaves $F$ on $dg_+\CAlg(R)$.

\begin{example}\label{atlasYoverG2}
Example \ref{atlasYoverG} adapts to give stacky affine resolutions of quotient derived stacks $[Y/G]$, such as the derived Hamiltonian $[\oR\mu^{-1}(0)/G]$ reduction of Example \ref{Hamex}. Again, the  
 simplicial resolution is the nerve of the groupoid $([(Y \by G)/(\g \oplus \g)] \Rightarrow  [Y/\g])$ of stacky derived  affines.
\end{example}

\subsubsection{Tangent complexes and Poisson structures}

Analogues of the expressions in \S \ref{tgtsn} hold for stacky CDGAs, with the same references, so that in particular
\[
 \oR\HHom_{\sO_{\fX}}(\bL_{\fX}^{\ten p},\sO_{\fX}) \simeq  \oR\Gamma((D_*\fX)_{\et}, \widehat{\HHom}_{\sO}((\Omega^1_{\sO})^{\ten p},\sO)),
\]
for $(D_*\fX)_{\et}$ the site of \'etale maps $\Spec A \to (D_*\fX)_{\et}$ ($\infty$-localised at levelwise quasi-isomorphisms) for cofibrant geometric stacky CDGAs $A$.

This immediately leads to a formulation of shifted Poisson structures \cite[\S \ref{poisson-bipoisssn}]{poisson}. An $n$-shifted Poisson structure on a stacky derived affine $\Spec B$ is an enrichment of the CDGA structure on $\widehat{\Tot} B$ to a strong homotopy $P_{n+1}$-algebra structure, but with some boundedness restrictions on the (higher) Poisson brackets, which are required to lie in $\widehat{\HHom}_{B}((\Omega^1_B)^{\ten p},B)$.
An $n$-shifted Poisson structure on a derived Artin stack $\fX$ is then an  $\infty$-functorial choice of $n$-shifted Poisson structure for each stacky derived affine \'etale over $D_*\fX$.

\subsubsection{Deformation quantisation}\label{derivedquantsn}

All of the quantisations described in \S \ref{poissonquantsn} extend to the derived setting, once the relevant Hochschild complexes are defined by using $\widehat{\HHom}$ in the appropriate places. There are also several quantisation results which are only really interesting or new in derived or singular cases, which we now describe. 

Existence of curved $A_{\infty}$ quantisations of  $0$-shifted Poisson structures as almost commutative or $BD_1$-algebras,  formulated in terms of the Hochschild complex and allowing curvature, is given in \cite{DQnonneg,DQpoisson}.

\begin{example}\label{TstarBGmex}
   Consider the derived cotangent stack $T^*B\bG_m$ of the stack $B\bG_m$ (where $\bG_m=\GL_1$), which classifies line bundles.
 Like any cotangent stack, it carries a $0$-shifted symplectic structure, with associated Poisson structures on all stacky derived affines which are \'etale over it. This Poisson structure has a canonical deformation quantisation, giving rise to a deformation of the dg category of perfect complexes, which is described explicitly in \cite[Example 3.15]{DerCotMain}. 
 
 The dg category has a set of generators indexed by $\Z$, with no non-zero morphism between different generators. Endomorphisms of the generator of weight $n$ are given by the $R\llb \hbar \rrb$-CDGA $(R\llb \hbar \rrb \oplus R \llb \hbar \rrb t, \delta)$, with $t$ of cochain degree $-1$ and $\delta t = n\hbar$. On inverting $\hbar$,   these endomorphism complexes become acyclic for $n\ne 0$; that recovers the dg category of $R(\!( \hbar )\!)$-linear $\cD$-modules on $B\bG_m$,  so our quantised dg category (pre-inversion) has a much richer structure.
 
\end{example}

Existence of $(-1)$-shifted quantisations, as $BD_0$-algebras formulated in terms of differential operators, are given in \cite{DQvanish}.  For $(-2)$-shifted structures, the Beilinson--Drinfeld hierarchy of operads breaks down, but formulations are sometimes possible in terms of solutions of a quantum master equation \cite{DQ-2}. 

\subsection{Differentiable and analytic stacks}\label{Fermatsn} 

We now explain how to extend all of the results and formulations described so far  to differentiable and analytic stacks.

\subsubsection{Differentiable stacks}

In the differentiable setting, the analogue of a smooth geometric stacky affine is just an NQ-manifold. In other words, we have a CDGA
\[
 A^0 \xra{\pd} A^1 \xra{\pd} A^2\xra{\pd} \ldots 
 \]
where $A^0$ is the ring of $\cC^{\infty}$ functions on a some differentiable manifold $X_0$ and $\pd \co A^0 \to A^1$ is a $\cC^{\infty}$-derivation, with $A^{\#}$ freely generated over $A^0$ by a graded projective module (equivalently, functions on a graded vector bundle over $X_0$).

The analogue of a stacky affine in this setting drops all the freeness conditions, so $A^{\bt}$ is just an $\R$-CDGA equipped with a $\cC^{\infty}$-ring structure \cite{dubuc,MoerdijkReyes} on $A^0$ and the requirement that $\pd \co A^0 \to A^1$ be a $\cC^{\infty}$-derivation. $\cC^{\infty}$-rings can model singular spaces, often arising as quotients of rings of $\cC^{\infty}$ functions on  differentiable manifolds by arbitrary ideals. 

Making use of the theory of $\cC^{\infty}$-rings   and their homotopy theory as in \cite{CarchediRoytenbergHomological},
all of the results of \S\S \ref{stackyaffsn}--\ref{consequencesn} adapt to differentiable stacks. 
In this setting, \'etale and smooth morphisms correspond to local diffeomorphisms and submersions, respectively.
Most of the constructions are described in \cite{DQDG}, but in brief, every differentiable $n$-stack admits an \'etale $n$-atlas of NQ-manifolds. These atlases allow Poisson structures and deformation quantisations to be formulated in terms of  $\cC^{\infty}$-multiderivations and differential operators, and the algebraic existence and classification proofs adapt verbatim.

Introducing a second grading and a chain differential leads to derived structures as in  \cite{nuitenThesis}, with the results of \S \ref{derivedsn} then adapting to derived differentiable stacks --- again, see \cite{DQDG}.

\subsubsection{Analytic stacks}

In the complex analytic setting, the analogue of a smooth geometric stacky affine is a complex Stein NQ-manifold. In other words, we have a CDGA
\[
 A^0 \xra{\pd} A^1 \xra{\pd} A^2\xra{\pd} \ldots 
 \]
where $A^0$ is the ring of holomorphic functions on a some Stein manifold $X_0$ and $\pd \co A^0 \to A^1$ is an analytic derivation, with $A^{\#}$ freely generated over $A^0$ by a graded projective module (equivalently, functions on a graded vector bundle over $X_0$).

The analogue of a stacky affine in this setting drops all the freeness conditions, so $A^{\bt}$ is just a $\Cx$-CDGA equipped with a Stein algebra   structure on $A^0$ and the requirement that $\pd \co A^0 \to A^1$ be an analytic derivation. 

Again, introducing a second grading and a chain differential leads to derived structures, which are formulated in detail in \cite{DStein}, using the theory of rings with entire functional calculus. All of the results from the differentiable setting of \cite{DQDG} adapt to the analytic setting, and indeed to any Fermat theory.

In non-Archimedean settings, \cite{DStein} also gives a formulation of derived analytic geometry based on enriched Stein manifolds and Stein spaces, to which the results of \cite{DQDG} all adapt. The setup is based on overconvergent functions, and pro-objects crop up more than in other geometries, since we have to regard open polydiscs as inverse limits of the closed polydiscs containing them.   

\bibliographystyle{alphanum}
\addcontentsline{toc}{section}{Bibliography}
\bibliography{references}

\newcommand{\etalchar}[1]{$^{#1}$}
 \newcommand{\noop}[1]{} \def\cprime{$'$}
\begin{thebibliography}{{Sim}1}

\bibitem[Art]{Artin}
M.~Artin.
\newblock Versal deformations and algebraic stacks.
\newblock {\em Invent. Math.}, 27:165--189, 1974.

\bibitem[BPS]{DerCotMain}
Marco Benini, Jonathan~P. Pridham, and Alexander Schenkel.
\newblock Quantization of derived cotangent stacks and gauge theory on directed
  graphs.
\newblock {\em Adv. Theor. Math. Phys.}, to appear.
\newblock arXiv:2201.10225 [math-ph].

\bibitem[Cal]{calaqueLagrangian}
Damien Calaque.
\newblock Lagrangian structures on mapping stacks and semi-classical {TFT}s.
\newblock In {\em Stacks and categories in geometry, topology, and algebra},
  volume 643 of {\em Contemp. Math.}, pages 1--23. Amer. Math. Soc.,
  Providence, RI, 2015.

\bibitem[CPT{\etalchar{+}}]{CPTVV}
D.~Calaque, T.~Pantev, B.~To{\"e}n, M.~Vaqui{\'e}, and G.~Vezzosi.
\newblock Shifted {P}oisson structures and deformation quantization.
\newblock {\em J. Topol.}, 10(2):483--584, 2017\noop{2015}.
\newblock arXiv:1506.03699v4 [math.AG].

\bibitem[CR]{CarchediRoytenbergHomological}
D.~{Carchedi} and D.~{Roytenberg}.
\newblock {Homological Algebra for Superalgebras of Differentiable Functions}.
\newblock arXiv:1212.3745 [math.AG], 2012.

\bibitem[Dub]{dubuc}
Eduardo~J. Dubuc.
\newblock {$C^{\infty }$}-schemes.
\newblock {\em Amer. J. Math.}, 103(4):683--690, 1981.

\bibitem[GJ]{sht}
Paul~G. Goerss and John~F. Jardine.
\newblock {\em Simplicial homotopy theory}, volume 174 of {\em Progress in
  Mathematics}.
\newblock Birkh{\"a}user Verlag, Basel, 1999.

\bibitem[KV]{KhudaverdianVoronov}
H.~M. Khudaverdian and Th.~Th. Voronov.
\newblock Higher {P}oisson brackets and differential forms.
\newblock In {\em Geometric methods in physics}, volume 1079 of {\em AIP Conf.
  Proc.}, pages 203--215. Amer. Inst. Phys., Melville, NY, 2008.
\newblock arXiv:0808.3406v2 [math-ph].

\bibitem[LMB]{champs}
G{\'e}rard Laumon and Laurent Moret-Bailly.
\newblock {\em Champs alg{\'e}briques}, volume~39 of {\em Ergebnisse der
  Mathematik und ihrer Grenzgebiete. 3. Folge. A Series of Modern Surveys in
  Mathematics [Results in Mathematics and Related Areas. 3rd Series. A Series
  of Modern Surveys in Mathematics]}.
\newblock Springer-Verlag, Berlin, 2000.

\bibitem[Lur]{lurie}
J.~Lurie.
\newblock {\em Derived Algebraic Geometry}.
\newblock PhD thesis, M.I.T., 2004.
\newblock
  \href{http://hdl.handle.net/1721.1/30144}{http://hdl.handle.net/1721.1/30144}
  or
  \href{https://www.math.ias.edu/~lurie/papers/DAG.pdf}{https://www.math.ias.edu/$\sim$lurie/papers/DAG.pdf}.

\bibitem[MR]{MoerdijkReyes}
Ieke Moerdijk and Gonzalo~E. Reyes.
\newblock {\em Models for smooth infinitesimal analysis}.
\newblock Springer-Verlag, New York, 1991.

\bibitem[MS1]{MelaniSafronovI}
Valerio Melani and Pavel Safronov.
\newblock Derived coisotropic structures {I}: affine case.
\newblock {\em Selecta Math. (N.S.)}, 24(4):3061--3118, 2018.
\newblock arXiv:1608.01482 [math.AG].

\bibitem[MS2]{MelaniSafronovII}
Valerio Melani and Pavel Safronov.
\newblock Derived coisotropic structures {II}: stacks and quantization.
\newblock {\em Selecta Math. (N.S.)}, 24(4):3119--3173, 2018.
\newblock arXiv:1704.03201 [math.AG].

\bibitem[Nui]{nuitenThesis}
Joost Nuiten.
\newblock {\em Lie algebroids in derived differential topology}.
\newblock PhD thesis, Utrecht, 2018.

\bibitem[Pri1]{ddt1}
J.~P. Pridham.
\newblock Unifying derived deformation theories.
\newblock {\em Adv. Math.}, 224(3):772--826, 2010\noop{2007}.
\newblock corrigendum 228 (2011), no. 4, 2554--2556, arXiv:0705.0344v6
  [math.AG].

\bibitem[Pri2]{stacksintro}
J.~P. Pridham.
\newblock Notes characterising higher and derived stacks concretely.
\newblock arXiv:1105.4853v3 [math.AG], 2011.

\bibitem[Pri3]{stacks2}
J.~P. Pridham.
\newblock Presenting higher stacks as simplicial schemes.
\newblock {\em Adv. Math.}, 238:184--245, 2013\noop{2009}.
\newblock arXiv:0905.4044v4 [math.AG].

\bibitem[Pri4]{poisson}
J.~P. Pridham.
\newblock Shifted {P}oisson and symplectic structures on derived {$N$}-stacks.
\newblock {\em J. Topol.}, 10(1):178--210, 2017\noop{2015}.
\newblock arXiv:1504.01940v5 [math.AG].

\bibitem[Pri5]{DQ-2}
J.~P. Pridham.
\newblock Deformation quantisation for {$(-2)$}-shifted symplectic structures.
\newblock arXiv: 1809.11028v2 [math.AG], 2018.

\bibitem[Pri6]{DQnonneg}
J.~P. Pridham.
\newblock Deformation quantisation for unshifted symplectic structures on
  derived {A}rtin stacks.
\newblock {\em Selecta Math. (N.S.)}, 24(4):3027--3059, 2018.
\newblock arXiv: 1604.04458v4 [math.AG].

\bibitem[Pri7]{DQDG}
J.~P. Pridham.
\newblock An outline of shifted {P}oisson structures and deformation
  quantisation in derived differential geometry.
\newblock arXiv: 1804.07622v3 [math.DG], 2018.

\bibitem[Pri8]{DQvanish}
J.~P. Pridham.
\newblock Deformation quantisation for {$(-1)$}-shifted symplectic structures
  and vanishing cycles.
\newblock {\em Algebr. Geom.}, 6(6):747--779, 2019.
\newblock arXiv:1508.07936v5 [math.AG].

\bibitem[Pri9]{DQpoisson}
J.~P. Pridham.
\newblock Quantisation of derived {P}oisson structures.
\newblock arXiv: 1708.00496v5 [math.AG], 2019.

\bibitem[Pri10]{DStein}
J.~P. Pridham.
\newblock A differential graded model for derived analytic geometry.
\newblock {\em Advances in Mathematics}, 360:106922, 2020\noop{2018}.
\newblock arXiv: 1805.08538v1 [math.AG].

\bibitem[Pri11]{DQLag}
J.~P. Pridham.
\newblock Quantisation of derived {L}agrangians.
\newblock {\em Geom. Topol.}, to appear.
\newblock arXiv: 1607.01000v4 [math.AG].

\bibitem[PTVV]{PTVV}
T.~Pantev, B.~To{\"e}n, M.~Vaqui{\'e}, and G.~Vezzosi.
\newblock Shifted symplectic structures.
\newblock {\em Publ. Math. Inst. Hautes \'Etudes Sci.}, 117:271--328,
  2013\noop{2011}.
\newblock arXiv: 1111.3209v4 [math.AG].

\bibitem[{Sim}1]{simpsonAlgGeomNstacks}
C.~{Simpson}.
\newblock {Algebraic (geometric) $n$-stacks}.
\newblock arXiv:alg-geom/9609014, 1996.

\bibitem[Sim2]{simpsonHtpy}
Carlos Simpson.
\newblock Homotopy over the complex numbers and generalized de {R}ham
  cohomology.
\newblock In {\em Moduli of vector bundles ({S}anda, 1994; {K}yoto, 1994)},
  volume 179 of {\em Lecture Notes in Pure and Appl. Math.}, pages 229--263.
  Dekker, New York, 1996.

\bibitem[TV]{hag2}
Bertrand To{\"e}n and Gabriele Vezzosi.
\newblock Homotopical algebraic geometry. {II}. {G}eometric stacks and
  applications.
\newblock {\em Mem. Amer. Math. Soc.}, 193(902):x+224, 2008\noop{2004}.
\newblock arXiv math.AG/0404373 v7.

\bibitem[Wei]{W}
Charles~A. Weibel.
\newblock {\em An introduction to homological algebra}.
\newblock Cambridge University Press, Cambridge, 1994.

\end{thebibliography}
\end{document}